\documentclass[11pt]{article}%
\usepackage[fleqn]{amsmath}
\usepackage{amsmath,amsfonts,amssymb,graphicx,makeidx,amsthm}
\usepackage{amscd,amsfonts,amssymb,multicol}
\usepackage{amssymb, amsmath}
\usepackage{amsfonts}
\usepackage{amssymb}
\usepackage{graphicx}%
\setcounter{MaxMatrixCols}{30}
\providecommand{\U}[1]{\protect\rule{.1in}{.1in}}
\newtheorem{theorem}{Theorem}[section]

\newtheorem{example}[theorem]{Example}

\newtheorem{problem}[theorem]{Problem}
\newtheorem{proposition}[theorem]{Proposition}
\newtheorem{remark}[theorem]{Remark}

\numberwithin{equation}{section}

\textwidth 170mm
\textheight 230.7mm
\oddsidemargin=0mm
\evensidemargin=0mm
\addtolength{\topmargin}{-1in}
\begin{document}

\title{\textbf{A direct method for solving inverse Sturm-Liouville problems}}
\author{Vladislav V. Kravchenko$^{1}$, Sergii M. Torba$^{1}$\\{\small $^{1}$ Departamento de Matem\'{a}ticas, Cinvestav, Unidad
Quer\'{e}taro, }\\{\small Libramiento Norponiente \#2000, Fracc. Real de Juriquilla,
Quer\'{e}taro, Qro., 76230 MEXICO.}\\{\small e-mail: vkravchenko@math.cinvestav.edu.mx,
storba@math.cinvestav.edu.mx, \thanks{Research was supported by CONACYT,
Mexico via the projects 222478 and 284470. Research of Vladislav Kravchenko
was supported by the Regional mathematical center of the Southern Federal
University, Russia.}}}
\maketitle

\begin{abstract}
We consider two main inverse Sturm-Liouville problems:\ the problem of
recovery of the potential and the boundary conditions from two spectra or from
a spectral density function. A simple method for practical solution of such
problems is developed, based on the transmutation operator approach, new
Neumann series of Bessel functions representations for solutions and the
Gelfand-Levitan equation. The method allows one to reduce the inverse
Sturm-Liouville problem directly to a system of linear algebraic equations,
such that the potential is recovered from the first element of the solution
vector. We prove the stability of the method and show its numerical efficiency
with several numerical examples.

\end{abstract}

\section{Introduction}

We consider classical inverse Sturm-Liouville problems on a finite interval.
Several techniques have been proposed for their numerical solution (see
\cite{Brown et al 2003}, \cite{Drignei 2015}, \cite{Gao et al 2013}, \cite{Gao
et al 2014}, \cite{IgnatievYurko}, \cite{Kammanee Bockman 2009},
\cite{Kr2019JIIP}, \cite{Lowe et al 1992}, \cite{Neamaty et al 2017},
\cite{Neamaty et al 2019}, \cite{Rohrl}, \cite{Rundell Sacks}, \cite{Sacks}). However,
usually the methods proposed require either the knowledge of additional
parameters like, e.g., the mean of the potential or do not offer a complete
solution of the problem, e.g., they may not be able to recover the boundary
conditions. Often both mentioned drawbacks are present. For example, one of
the most successful algorithms, proposed in \cite{Rundell Sacks}, requires an
estimate of the mean of the potential as well as the knowledge of the boundary
conditions. Moreover, the authors write \cite[p. 177]{Rundell Sacks}:
\textquotedblleft As was mentioned earlier, if complete spectral data is
available, then it is in theory possible to determine the boundary conditions
as part of the solution of the problem. We do not believe, however, that this
is numerically feasible in most cases.\textquotedblright\ The boundary
conditions are assumed to be known in \cite{Brown et al 2003}, \cite{Gao et al
2014}, \cite{Kammanee Bockman 2009} and \cite{Rohrl}. The method from
\cite{IgnatievYurko} as many other approaches requires the knowledge of a
parameter $\omega$ related to the mean of the potential and participating in
the second term of the asymptotics of the eigenvalues.

The approach developed in the present work allows one to obtain a complete
solution of the inverse Sturm-Liouville problem without requiring any
additional data. Moreover, we show that the parameter $\omega$ can be obtained
on a first step without using the asymptotics of the eigenvalues. The method
is based on the idea presented in \cite{Kr2019JIIP} and \cite{KrBook2020}
where it was shown that the inverse Sturm-Liouville problem of recovering the
problem by its spectral density function (Problem 1) can be reduced directly
to a system of linear algebraic equations by making use of the Gelfand-Levitan
integral equation and the Fourier-Legendre series expansion of the
transmutation integral kernel obtained in \cite{KNT}. Moreover, for recovering
the potential and the boundary conditions there is no need to solve the
Gelfand-Levitan equation and find the transmutation kernel. The first
coefficient of the Fourier-Legendre series which corresponds to the first
component of the solution vector of the linear algebraic system is sufficient
for recovering the Sturm-Liouville problem. In the present work we improve the
approach from \cite{Kr2019JIIP}, \cite{KrBook2020} by using the representation
for the Gelfand-Levitan kernel $F(x,t)$ obtained recently in \cite{KKT2019AMC}%
. We prove the stability of the method and extend it onto the inverse
Sturm-Liouville problem by two spectra (Problem 2).

The series representation for $F(x,t)$ from \cite{KKT2019AMC} to the
difference from the classical representation \cite{GL} does not present a jump
discontinuity and, moreover, converges uniformly on the whole domain of
definition. However, it requires the knowledge of the parameter $\omega$. We
show that as a consequence of the Fourier-Legendre series representation of
the transmutation kernel the parameter $\omega$ can be efficiently computed
from first eigenvalues without exploiting their asymptotics. We obtain a
system of linear algebraic equations for the coeffcients of the
Fourier-Legendre series based on the representation for $F(x,t)$ from
\cite{KKT2019AMC}, show that its truncated version is uniquely solvable, and
the numerical process is stable. Besides, for a still more accurate solution
of Problem 1 some auxiliary techniques are presented. The first allows us to
obtain the \textquotedblleft flipped\textquotedblright problem, that is the
Problem 1 for the flipped potential $q(\pi-x)$ with the boundary conditions
being interchanged. We show how the norming constants of this problem are
obtained from the spectral data of the original problem, and this result is
also based on the Fourier-Legendre series representation of the transmutation
kernel. The second technique serves for computing the second term of
asymptotics of the norming constants as well as some further asymptotic terms
of the eigenvalues.

Another result of the present work is the reduction of Problem 2 to Problem 1
which gives an excellent numerical method for solving the two spectra problem.
 The reduction uses again the Fourier-Legendre series expansion of the
transmutation integral kernel.

The method developed here is accurate and fast and allows one to obtain a
complete solution of the inverse Sturm-Liouville problem which includes not
only the potential but also the boundary conditions. We illustrate its
numerical performance by several examples including a smooth, non-smooth and a
discontinuous potentials.

\section{Preliminaries}

\subsection{Spectral data}

Let $q\in L_{2}(0,\pi)$ be real valued. Consider the Sturm-Liouville equation
\begin{equation}
-y^{\prime\prime}+q(x)y=\rho^{2}y,\quad x\in(0,\pi), \label{SL equation}%
\end{equation}
where $\rho\in\mathbb{C}$, and two sets of boundary conditions%
\begin{equation}
y^{\prime}(0)-hy(0)=0,\quad y^{\prime}(\pi)+Hy(\pi)=0, \label{bc1}%
\end{equation}
and%
\begin{equation}
y^{\prime}(0)-hy(0)=0,\quad y(\pi)=0, \label{bc2}%
\end{equation}
where $h$ and $H$ are arbitrary real constants.

Denote
\begin{equation}
\omega:=h+H+\frac{1}{2}\int_{0}^{\pi}q(t)\,dt \label{omega}%
\end{equation}
and
\begin{equation}
\omega_{1}:=h+\frac{1}{2}\int_{0}^{\pi}q(t)\,dt=\omega-H. \label{omega1}%
\end{equation}

By $\varphi(\rho,x)$ we denote a solution of (\ref{SL equation}) satisfying
the initial conditions%
\begin{equation}
\varphi(\rho,0)=1\quad\text{and}\quad\varphi^{\prime}(\rho,0)=h.
\label{init cond}%
\end{equation}
Obviously, for all $\rho\in\mathbb{C}$ the function $\varphi(\rho,x)$ fulfills
the first boundary condition, $\varphi^{\prime}(\rho,0)-h\varphi(\rho,0)=0$,
and thus, the spectrum of problem (\ref{SL equation}), (\ref{bc1}) is the
sequence of numbers $\left\{  \lambda_{n}=\rho_{n}^{2}\right\}  _{n=0}%
^{\infty}$ such that
\[
\varphi^{\prime}(\rho_{n},\pi)+H\varphi(\rho_{n},\pi)=0
\]
while the spectrum of problem (\ref{SL equation}), (\ref{bc2}) is the sequence
of numbers $\left\{  \nu_{n}=\mu_{n}^{2}\right\}  _{n=0}^{\infty}$ such that
\[
\varphi(\mu_{n},\pi)=0.
\]

Denote
\begin{equation}
\label{alpha n}\alpha_{n}:=\int_{0}^{\pi}\varphi^{2}(\rho_{n},x)\,dx.
\end{equation}
The set $\left\{  \alpha_{n}\right\}  _{n=0}^{\infty}$ is referred to as the
sequence of norming constants of problem (\ref{SL equation}), (\ref{bc1}).

Let us formulate two basic inverse Sturm-Liouville problems.

\begin{problem}
[Recovery of a Sturm-Liouville problem from its spectral function]%
\label{Problem 1} Given two sequences of real numbers $\left\{  \lambda
_{n}\right\}  _{n=0}^{\infty}$ and $\left\{  \alpha_{n}\right\}
_{n=0}^{\infty}$, find a real valued $q\in L_{2}(0,\pi)$, and the constants
$h$, $H\in\mathbb{R}$, such that $\left\{  \lambda_{n}\right\}  _{n=0}%
^{\infty}$ be the spectrum of problem \eqref{SL equation}, \eqref{bc1} and
$\left\{  \alpha_{n}\right\}  _{n=0}^{\infty}$ its sequence of norming constants.
\end{problem}

\begin{problem}
[Recovery of a Sturm-Liouville problem from two spectra]\label{Problem 2}
Given two sequences of real numbers $\left\{  \lambda_{n}\right\}
_{n=0}^{\infty}$ and $\left\{  \nu_{n}\right\}  _{n=0}^{\infty}$, find a real
valued $q\in L_{2}(0,\pi)$, and the constants $h$, $H\in\mathbb{R}$, such that
$\left\{  \lambda_{n}\right\}  _{n=0}^{\infty}$ be the spectrum of problem
\eqref{SL equation}, \eqref{bc1} and $\left\{  \nu_{n}\right\}  _{n=0}%
^{\infty}$ the spectrum of problem \eqref{SL equation}, \eqref{bc2}.
\end{problem}

In order to assure the existence of a solution, the sequences of numbers
appearing in the formulation of these two problems can not be completely
arbitrary. The following criteria are valid.

\begin{theorem}
[{see \cite[Theorem 1.3.2]{Yurko2007}}]The sequences of real numbers $\left\{
\lambda_{n}\right\}  _{n=0}^{\infty}$ and $\left\{  \alpha_{n}\right\}
_{n=0}^{\infty}$ represent spectral data of a Sturm-Liouville problem
\eqref{SL equation}, \eqref{bc1} if and only if the following relations hold%
\begin{equation}
\rho_{n}:=\sqrt{\lambda_{n}}=n+\frac{\omega}{\pi n}+\frac{k_{n}}{n}%
,\quad\alpha_{n}=\frac{\pi}{2}+\frac{K_{n}}{n},\quad\left\{  k_{n}\right\}
,\ \left\{  K_{n}\right\}  \in\ell_{2}, \label{asympt 1}%
\end{equation}
and
\[
\alpha_{n}>0,\quad\lambda_{n}\neq\lambda_{m}\ (n\neq m)
\]

\end{theorem}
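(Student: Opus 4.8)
The plan is to prove the classical Marchenko-type characterization theorem (due, in this formulation, to Yurko) by the standard route through the Gelfand-Levitan equation and the transmutation operator. This is a known result, so what follows is an outline of the standard argument rather than a new proof.

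First I would treat necessity, which is the easier direction. Assume $\{\lambda_n\}$, $\{\alpha_n\}$ are the spectral data of some problem \eqref{SL equation}, \eqref{bc1} with $q\in L_2(0,\pi)$ and $h,H\in\mathbb R$. The asymptotic formulas in \eqref{asympt 1} are obtained by a Liouville-transformation / successive-approximation analysis of the solution $\varphi(\rho,x)$: one writes $\varphi(\rho,x)=\cos\rho x + \frac{\omega_1\sin\rho x}{\rho} + o(1/\rho)$ uniformly on $[0,\pi]$ (more precisely with an $L_2$-in-$n$ remainder after dividing by the relevant power of $n$), substitutes into the characteristic function $\varphi'(\rho,\pi)+H\varphi(\rho,\pi)$, and locates its zeros, giving $\rho_n = n + \frac{\omega}{\pi n} + \frac{k_n}{n}$ with $\{k_n\}\in\ell_2$; here the quantity $\omega=h+H+\tfrac12\int_0^\pi q$ emerges precisely as the coefficient of the second-order term. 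The expansion $\alpha_n = \frac\pi2 + \frac{K_n}{n}$, $\{K_n\}\in\ell_2$, follows by inserting the same asymptotics of $\varphi(\rho_n,x)$ into the integral \eqref{alpha n}. Positivity $\alpha_n>0$ is immediate from the definition \eqref{alpha n} since $\varphi(\rho_n,\cdot)$ is a nonzero real eigenfunction, and simplicity $\lambda_n\neq\lambda_m$ is the standard fact that a self-adjoint regular Sturm-Liouville problem with separated boundary conditions has simple spectrum.

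For sufficiency I would run the Gelfand-Levitan construction. Given sequences satisfying \eqref{asympt 1} together with positivity and simplicity, form the formal spectral measure and the function
\begin{equation}
F(x,t)=\sum_{n=0}^{\infty}\left(\frac{\cos\rho_n x\,\cos\rho_n t}{\alpha_n}-\frac{\cos n x\,\cos n t}{\alpha_n^{0}}\right),
\end{equation}
where $\rho_n^0=n$, $\alpha_0^0=\pi$, $\alpha_n^0=\pi/2$ are the data of the unperturbed problem ($q\equiv0$, $h=H=0$); the conditions \eqref{asympt 1} are exactly what is needed for this series to converge (in a suitable sense) to a function $F$ with $F(\cdot,0)\in W_2^1$ etc. One then shows the Gelfand-Levitan equation
\begin{equation}
K(x,t)+F(x,t)+\int_0^x K(x,s)F(s,t)\,ds=0,\qquad 0<t<x<\pi,
\end{equation}
has a unique solution $K(x,t)$ for each $x$: the associated integral operator $I+\mathcal F_x$ on $L_2(0,x)$ is shown to be invertible because it is a compact perturbation of the identity that is positive definite, the positivity coming from $\alpha_n>0$ and the completeness/minimality of the system $\{\cos\rho_n t\}$, which in turn follows from the asymptotics. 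From $K$ one recovers $q(x)=2\frac{d}{dx}K(x,x)$, $h=K(0,0)$, and defines $\varphi(\rho,x)=\cos\rho x+\int_0^x K(x,t)\cos\rho t\,dt$; differentiating the Gelfand-Levitan equation shows $\varphi$ solves \eqref{SL equation} with this $q$ and satisfies \eqref{init cond}. Finally one verifies that the constructed problem has exactly $\{\lambda_n\}$ as its spectrum and $\{\alpha_n\}$ as its norming constants, and one reads off $H$ from the second boundary condition (equivalently from $\omega$ and the recovered $q,h$); this last bookkeeping, together with checking the boundary condition at $\pi$ is genuinely satisfied by all the $\varphi(\rho_n,\cdot)$, closes the argument.

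The main obstacle is the sufficiency direction, and within it the two technical cruxes: (i) showing that the conditions \eqref{asympt 1} plus $\alpha_n>0$ actually force the operator $I+\mathcal F_x$ to be boundedly invertible on every $L_2(0,x)$, $0<x\le\pi$ — this is where one needs the interplay between the $\ell_2$ remainder terms (which control convergence and compactness) and the positivity of the norming constants (which gives strict positivity of the operator) — and (ii) verifying after the fact that the recovered $(q,h,H)$ reproduces \emph{exactly} the prescribed data, i.e.\ that no spurious eigenvalues appear and the norming constants come out right; this uniqueness-type step again leans on completeness of $\{\cos\rho_n x\}$ in $L_2(0,\pi)$, which is a consequence of \eqref{asympt 1}. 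Since the result is quoted from \cite[Theorem 1.3.2]{Yurko2007}, in the paper itself one would simply cite it; the sketch above indicates where the hypotheses are used.
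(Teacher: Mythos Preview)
Your proposal is correct: the paper does not prove this theorem at all but simply cites \cite[Theorem 1.3.2]{Yurko2007}, exactly as you anticipated in your final paragraph. The outline you give --- asymptotics of $\varphi(\rho,x)$ for necessity, and the Gelfand--Levitan construction with invertibility of $I+\mathcal F_x$ for sufficiency --- is the standard argument found in that reference, so there is nothing to compare.
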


\begin{theorem}
[{see \cite[Theorem 1.3.4]{Yurko2007} and \cite{SavchukShkalikov} for a
generalization}]The sequences of real numbers $\left\{  \lambda_{n}\right\}
_{n=0}^{\infty}$ and $\left\{  \nu_{n}\right\}  _{n=0}^{\infty}$ are spectra
of the Sturm-Liouville problems \eqref{SL equation}, \eqref{bc1} and
\eqref{SL equation}, \eqref{bc2}, respectively, if and only if the following
relations hold%
\begin{equation}
\rho_{n}:=\sqrt{\lambda_{n}}=n+\frac{\omega}{\pi n}+\frac{k_{n}}{n},\quad
\mu_{n}:=\sqrt{\nu_{n}}=n+\frac{1}{2}+\frac{\omega_{1}}{\pi n}+\frac{K_{n}}%
{n},\quad\left\{  k_{n}\right\}  ,\ \left\{  K_{n}\right\}  \in\ell
_{2},\label{asympt 2}%
\end{equation}
and%
\[
\lambda_{n}<\nu_{n}<\lambda_{n+1},\quad n\geq0.
\]

\end{theorem}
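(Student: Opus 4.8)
The statement is the classical characterization of two spectra, and my plan is to reduce it to the characterization of the pair $\{\lambda_{n}\}$, $\{\alpha_{n}\}$ stated just above, by means of a classical identity that expresses the norming constants through the two characteristic functions. Throughout set $\Delta(\rho):=\varphi^{\prime}(\rho,\pi)+H\varphi(\rho,\pi)$ and $D(\rho):=\varphi(\rho,\pi)$, the characteristic functions of problems \eqref{SL equation}, \eqref{bc1} and \eqref{SL equation}, \eqref{bc2}; both are even entire functions of $\rho$ of exponential type $\pi$, whose positive zeros are $\{\rho_{n}\}$ and $\{\mu_{n}\}$ respectively. For necessity, with $q\in L_{2}(0,\pi)$ and $h,H\in\mathbb{R}$ given, the asymptotic relations \eqref{asympt 2} are obtained in the usual way: one inserts the transmutation representation $\varphi(\rho,x)=\cos\rho x+\int_{0}^{x}K(x,t)\cos\rho t\,dt$, with $K(x,x)=h+\frac{1}{2}\int_{0}^{x}q(t)\,dt$ (so that $K(\pi,\pi)=\omega_{1}$), into $\Delta$ and $D$, integrates by parts once to extract the leading terms $-\rho\sin\rho\pi$ and $\cos\rho\pi$ together with the $O(1/\rho)$ corrections carrying $\omega$ and $\omega_{1}$, and then inverts these relations near the zeros. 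The interlacing $\lambda_{n}<\nu_{n}<\lambda_{n+1}$ is a consequence of Sturm oscillation theory: for equation \eqref{SL equation} with the condition \eqref{init cond} fixed at $0$ the $n$-th eigenvalue depends strictly monotonically on the parameter in the boundary condition at $\pi$, and the Dirichlet condition $y(\pi)=0$ is recovered in the two limits of that parameter (with a shift of index), which forces the strict separation.

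Next I would record the bridging identity. Applying the Lagrange identity to $\varphi(\rho,\cdot)$ and $\varphi(\sigma,\cdot)$ and using that the boundary term at $x=0$ vanishes because both satisfy \eqref{init cond}, one gets
\[
(\rho^{2}-\sigma^{2})\int_{0}^{\pi}\varphi(\rho,x)\varphi(\sigma,x)\,dx=\varphi(\rho,\pi)\varphi^{\prime}(\sigma,\pi)-\varphi^{\prime}(\rho,\pi)\varphi(\sigma,\pi).
\]
Putting $\rho=\rho_{n}$, using $\Delta(\rho_{n})=0$ to rewrite the right-hand side as $D(\rho_{n})\Delta(\sigma)$, dividing by $\rho_{n}-\sigma$ and letting $\sigma\to\rho_{n}$ yields
\[
\alpha_{n}=-\frac{D(\rho_{n})\,\dot{\Delta}(\rho_{n})}{2\rho_{n}},
\]
where the dot denotes differentiation in $\rho$. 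This identity is the bridge: it recovers the norming constants from the two spectra once $\Delta$ and $D$ are known.

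For sufficiency, given $\{\lambda_{n}\}$ and $\{\nu_{n}\}$ subject to \eqref{asympt 2} and the interlacing, I would first reconstruct $\Delta$ and $D$ as canonical products over $\{\pm\rho_{n}\}$ and $\{\pm\mu_{n}\}$, the asymptotics \eqref{asympt 2} fixing the genus and the normalization so that $\Delta$ behaves like $-\rho\sin\rho\pi$ and $D$ like $\cos\rho\pi$ to leading order on $\mathbb{R}$; then I would define $\alpha_{n}$ by the displayed formula. Two things have to be checked: (i) $\alpha_{n}>0$ and the $\lambda_{n}$ are pairwise distinct, both of which follow from $\rho_{n}<\mu_{n}<\rho_{n+1}$, since this relation forces the signs of $\dot{\Delta}(\rho_{n})$ and of $D(\rho_{n})$ to alternate in the coordinated way that makes the quotient positive; and (ii) $\alpha_{n}=\frac{\pi}{2}+K_{n}/n$ with $\{K_{n}\}\in\ell_{2}$, which is an asymptotic computation with the canonical products and their logarithmic derivatives based on \eqref{asympt 2}. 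With (i) and (ii) in hand, the characterization of $\{\lambda_{n}\}$, $\{\alpha_{n}\}$ recalled above furnishes $q\in L_{2}(0,\pi)$ and $h,H\in\mathbb{R}$ for which $\{\lambda_{n}\}$ is the spectrum of \eqref{SL equation}, \eqref{bc1} and $\{\alpha_{n}\}$ its norming constants. Finally one must verify that $\{\nu_{n}\}$ is the spectrum of \eqref{SL equation}, \eqref{bc2} for this $q$ and $h$: the corresponding characteristic function $\widetilde{D}(\rho)=\varphi(\rho,\pi)$ satisfies the same bridging identity (the characteristic function of problem \eqref{SL equation}, \eqref{bc1} for the recovered data coincides with $\Delta$, having the same zeros and asymptotics), hence $\widetilde{D}(\rho_{n})=D(\rho_{n})$ for all $n$; since $\widetilde{D}$ and $D$ are even entire functions of exponential type $\pi$ with the same real asymptotics that agree on the sufficiently dense sequence $\{\rho_{n}\}$, a standard uniqueness theorem for entire functions of exponential type gives $\widetilde{D}\equiv D$, so their zero sets, and therefore the second spectra, coincide.

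The main obstacle lies entirely on the sufficiency side, in two places: first, the estimates showing that the norming constants built from the canonical products obey the precise $\ell_{2}$-type asymptotics $\alpha_{n}=\frac{\pi}{2}+K_{n}/n$; and second, the concluding entire-function argument identifying $\widetilde{D}$ with $D$, which is where the interlacing hypothesis is used in an essential way — it underpins both the positivity $\alpha_{n}>0$ and the sign pattern that pins down $\widetilde{D}$. The necessity direction is, by contrast, routine: oscillation theory for the separation, and standard asymptotic analysis of the characteristic functions for \eqref{asympt 2}.
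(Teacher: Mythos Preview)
The paper does not prove this theorem; it is stated as a known result with a citation to \cite[Theorem 1.3.4]{Yurko2007} and \cite{SavchukShkalikov}, so there is no in-paper proof to compare against. Your outline is essentially the standard proof found in the cited reference: reduce the two-spectra characterization to the $\{\lambda_{n}\},\{\alpha_{n}\}$ characterization via the identity $\alpha_{n}=-D(\rho_{n})\dot{\Delta}(\rho_{n})/(2\rho_{n})$ (which the paper itself quotes later as \eqref{alpha_n}), reconstruct the characteristic functions as Hadamard products, and verify the required sign and $\ell_{2}$ asymptotics for $\alpha_{n}$. One small point: in the final identification step you argue that $\widetilde{D}(\rho_{n})=D(\rho_{n})$ and then invoke a density/uniqueness theorem for entire functions of exponential type; in Yurko's treatment this is done slightly differently, by showing that $\widetilde{D}-D$ (or rather $\widetilde{D}/D$) has the right growth and enough zeros to force it to vanish identically, but your version is equivalent once the asymptotics are in hand.
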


\subsection{The transmutation integral kernel and the Gelfand-Levitan
equation}

The solution $\varphi(\rho,x)$ admits the integral representation (see, e.g.,
\cite[Chapter 6]{Levitan Sargsian}, \cite[Chapter 1]{Marchenko}, \cite[Chapter
3]{SitnikShishkina Elsevier})%
\begin{equation}
\varphi(\rho,x)=\cos\rho x+\int_{0}^{x}G(x,t)\cos\rho
t\,dt\label{sol transmutation}%
\end{equation}
where the transmutation integral kernel $G$ is a continuous function of both
arguments in the domain $0\leq t\leq x\leq\pi$ and satisfies the equalities
\begin{equation}
G(x,x)=h+\frac{1}{2}\int_{0}^{x}q(t)\,dt\label{Gxx}%
\end{equation}
and
\[
\left.  \frac{\partial}{\partial t}G(x,t)\right\vert _{t=0}=0.
\]
It is of crucial importance that $G(x,t)$ is independent of $\rho$.

Let%
\begin{equation}
F(x,t)=\sum_{n=0}^{\infty}\left(  \frac{\cos\rho_{n}x\cos\rho_{n}t}{\alpha
_{n}}-\frac{\cos nx\cos nt}{\alpha_{n}^{0}}\right)  ,\quad0\leq t,\,x<\pi
\label{F}%
\end{equation}
where
\[
\alpha_{n}^{0}=%
\begin{cases}
\pi/2, & n>0,\\
\pi, & n=0.
\end{cases}
\]
The functions $G$ and $F$ are related by the Gelfand-Levitan equation
\begin{equation}
G(x,t)+F(x,t)+\int_{0}^{x}F(t,s)G(x,s)\,ds=0,\quad0<t<x<\pi.
\label{Gelfand-Levitan}%
\end{equation}
Due to a slow convergence of the series (\ref{F}) and the presence of a jump
discontinuity of the series (\ref{F}) at $x=t=\pi$ (we refer to
\cite{KKT2019AMC} for more details), it was proposed in \cite[Section
13.4]{KrBook2020} to work with the integrated version of the Gelfand-Levitan
equation
\begin{equation}
\int_{0}^{t}G(x,\tau)\,d\tau+\widetilde{F}(x,t)+\int_{0}^{x}G(x,s)\widetilde
{F}(s,t)\,ds=0,\quad0<t<x \label{Gelfand-Levitan integrated}%
\end{equation}
obtained from (\ref{Gelfand-Levitan}) by using the symmetry $F(t,s)=F(s,t)$ of
the function $F$ and integrating it with respect to $t$. Here
\begin{equation}
\widetilde{F}(x,t)=\int_{0}^{t}F(x,\tau)\,d\tau=\frac{\cos\rho_{0}x\sin
\rho_{0}t}{\alpha_{0}\rho_{0}}-\frac{t}{\alpha_{0}^{0}}+\sum_{n=1}^{\infty
}\left(  \frac{\cos\rho_{n}x\sin\rho_{n}t}{\alpha_{n}\rho_{n}}-\frac{\cos
nx\sin nt}{\alpha_{n}^{0}n}\right)  . \label{Ftilde orig}%
\end{equation}
The advantage of (\ref{Ftilde orig}) lies in a faster convergence of the series.

We will be especially interested in the case when $\rho_{0}=0$. Then
\begin{equation}
\widetilde{F}(x,t)=\left(  \frac{1}{\alpha_{0}}-\frac{1}{\alpha_{0}^{0}%
}\right)  t+\sum_{n=1}^{\infty}\left(  \frac{\cos\rho_{n}x\sin\rho_{n}%
t}{\alpha_{n}\rho_{n}}-\frac{\cos nx\sin nt}{\alpha_{n}^{0}n}\right)  .
\label{Ftilde}%
\end{equation}

However, the integrated Gelfand-Levitan equation is no more a Fredholm
equation of the second kind (with respect to the function $G(x,\cdot)$). As a
result, it is unclear whether it is possible to obtain stability results for
corresponding numerical schemes of approximate solution of
\eqref{Gelfand-Levitan integrated}. Recall that for Fredholm equations there
is a well-developed general theory, see, e.g., \cite[Chapter 14,
\S 4]{KantorovichAkilov}.

In \cite{KKT2019AMC} another representation for the function $F$ was derived,
based on the following idea. The convergence of the series \eqref{F} can be
improved by subtracting certain expressions termwise and eventually adding a
closed form expression equal to the infinite sum of subtracted functions.
Namely (see \cite[(25)]{KKT2019AMC}),
\begin{equation}%
\begin{split}
F(x,t)  &  =\sum_{n=1}^{\infty}\left(  \frac{\cos\rho_{n}x\,\cos\rho_{n}%
t}{\alpha_{n}}-\frac{\cos nx\,\cos nt}{\alpha_{n}^{0}}+\frac{2\omega}{\pi
^{2}n}\Bigl(x\sin nx\,\cos nt+t\sin nt\,\cos nx\Bigr)\right) \\
&  \quad+\frac{1}{\alpha_{0}}-\frac{1}{\pi}-\frac{\omega}{\pi^{2}}%
\bigl(\pi\max\{x,t\}-x^{2}-t^{2}\bigr),
\end{split}
\label{F alt}%
\end{equation}
here $\omega$ is the parameter given by \eqref{omega}.

The series in \eqref{F alt} has no jump discontinuity at $x=t=\pi$ and
converges uniformly and absolutely and much faster than \eqref{F}, allowing
one to work directly with the Gelfand-Levitan equation \eqref{Gelfand-Levitan}
without passing to its integrated version (\ref{Gelfand-Levitan integrated}).

\subsection{Fourier-Legendre series expansion of the transmutation kernel}

The following result from \cite{KNT} will be used.

\begin{theorem}
[\cite{KNT}]\label{Theorem G series representation} The integral transmutation
kernel $G(x,t)$ admits the following Fourier-Legendre series representation
\begin{equation}
G(x,t)=\sum_{n=0}^{\infty} \frac{g_{n}(x)}{x}P_{2n}\left(  \frac{t}{x}\right)
,\quad0<t\leq x\leq\pi, \label{G Fourier series}%
\end{equation}
where $P_{k}$ stands for the Legendre polynomial of order $k$.

For every $x\in(0,\pi]$ the series converges in the norm of $L_{2}(0,x)$. The
first coefficient $g_{0}(x)$ has the form%
\begin{equation}
g_{0}(x)=\varphi(0,x)-1, \label{g0}%
\end{equation}
and the rest of the coefficients can be calculated following a simple
recurrent integration procedure.
\end{theorem}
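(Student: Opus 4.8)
We outline a proof. \textbf{Step 1: existence and $L_{2}$-convergence.} Fix $x\in(0,\pi]$. Since $G$ is continuous on the triangle $\{0\le t\le x\le\pi\}$, the function $t\mapsto G(x,t)$ belongs to $L_{2}(0,x)$, and we expand it in the system $\{P_{2n}(\cdot/x)\}_{n\ge 0}$. That system is orthogonal and complete in $L_{2}(0,x)$: extending any $f\in L_{2}(0,x)$ to an even function on $(-x,x)$, its Fourier--Legendre expansion in the complete system $\{P_{k}(\cdot/x)\}_{k\ge 0}$ of $L_{2}(-x,x)$ contains only even-indexed terms by parity, and restriction back to $(0,x)$ yields completeness there; orthogonality and the norms follow from $\int_{-x}^{x}P_{2n}(t/x)P_{2m}(t/x)\,dt=\frac{2x}{4n+1}\delta_{nm}$. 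Hence \eqref{G Fourier series} holds with
\[
g_{n}(x)=(4n+1)\int_{0}^{x}G(x,t)\,P_{2n}\!\left(\frac{t}{x}\right)dt,
\]
the series converging in $L_{2}(0,x)$ for every $x\in(0,\pi]$ by the general theory of orthonormal bases; in particular the coefficients $g_{n}$ are well defined functions on $(0,\pi]$.

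\textbf{Step 2: the first coefficient.} Since $P_{0}\equiv 1$, the formula above gives $g_{0}(x)=\int_{0}^{x}G(x,t)\,dt$. Putting $\rho=0$ in the transmutation representation \eqref{sol transmutation} yields $\varphi(0,x)=1+\int_{0}^{x}G(x,t)\,dt=1+g_{0}(x)$, which is exactly \eqref{g0}.

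\textbf{Step 3: the recurrence.} It remains to show that $g_{1},g_{2},\dots$ are obtained from $g_{0}$ by successive integrations. The route we would take is to turn \eqref{sol transmutation} into a Neumann series of Bessel functions: from the classical identity $\int_{-1}^{1}P_{k}(s)e^{izs}\,ds=2i^{k}j_{k}(z)$, where $j_{k}$ is the spherical Bessel function of order $k$, one obtains (taking real parts, $k=2n$ even) $\int_{0}^{x}P_{2n}(t/x)\cos\rho t\,dt=(-1)^{n}x\,j_{2n}(\rho x)$, and substituting the already established \eqref{G Fourier series} gives
\[
\varphi(\rho,x)=\cos\rho x+\sum_{n=0}^{\infty}(-1)^{n}g_{n}(x)\,j_{2n}(\rho x).
\]
We would then impose that $\varphi(\rho,\cdot)$ solves \eqref{SL equation} with $\varphi(\rho,0)=1$, $\varphi^{\prime}(\rho,0)=h$, differentiate twice in $x$, and use the spherical Bessel equation together with the three-term recurrence and derivative formulas for the $j_{k}$ to eliminate $\rho$ in favour of powers of $1/x$ and to re-express everything as a series in $\{j_{2n}(\rho x)\}$. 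Matching, for all $\rho$, the coefficient of each $j_{2n}(\rho x)$ then produces a lower-triangular system: $g_{n}$ obeys a linear ordinary differential relation whose inhomogeneous part involves only $g_{0},\dots,g_{n-1}$; with $g_{0}$ known from \eqref{g0} and the initial behaviour $g_{n}(x)=O(x^{2n+1})$ as $x\to 0$ (forced by $j_{2n}(\rho x)=O((\rho x)^{2n})$, by the regularity of $\varphi$ at the origin, and by $G(0,0)=h$ from \eqref{Gxx}), each $g_{n}$ is recovered by a finite number of quadratures. This is the announced recurrent integration procedure.

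The main obstacle is Step 3. One must justify termwise double differentiation in $x$ of the Bessel series, show that an identity $\sum_{n}c_{n}(x)\,j_{2n}(\rho x)\equiv 0$ valid for all $\rho$ forces $c_{n}\equiv 0$ (a completeness-and-asymptotics argument of the same flavour as in Step 1), and carry out the bookkeeping that produces the precise recursion and its initial data. An equivalent route that avoids Bessel functions but meets the same difficulty is to insert \eqref{G Fourier series} directly into the Goursat problem for $G$ --- the hyperbolic equation $G_{xx}-G_{tt}=q(x)G$ on $\{0<t<x\}$ together with \eqref{Gxx} and $G_{t}(x,0)=0$ --- where the technical nuisance is that the self-similar argument $t/x$ couples the two variables under $\partial_{x}^{2}-\partial_{t}^{2}$, so one should pass to the variable $\eta=t/x$ before separating the Legendre modes.
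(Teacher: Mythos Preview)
The paper does not contain a proof of this theorem: it is quoted verbatim from \cite{KNT} and used as a black box, so there is no in-paper argument to compare your proposal against. That said, a few comments on the proposal itself are in order.

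Your Steps~1 and~2 are correct and essentially self-contained: the even-Legendre system is indeed an orthogonal basis of $L_{2}(0,x)$ by the parity argument you give, and the identification $g_{0}(x)=\int_{0}^{x}G(x,t)\,dt=\varphi(0,x)-1$ follows at once from \eqref{sol transmutation} at $\rho=0$. These two steps already cover everything in the theorem except the last clause about the recurrent procedure.

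Step~3, as you yourself flag, is only a sketch. The route you outline (derive the NSBF expansion \eqref{phi}, substitute into the ODE, and match coefficients of $j_{2n}(\rho x)$) is workable in principle, but the difficulties you list are real: termwise second differentiation in $x$ of the Bessel series and the implication ``$\sum_{n}c_{n}(x)j_{2n}(\rho x)=0$ for all $\rho$ $\Rightarrow$ $c_{n}\equiv 0$'' both require justification, and the bookkeeping to extract an explicit recursion is nontrivial. In \cite{KNT} the recursion is in fact obtained by a different mechanism, based on the system of formal powers associated with the equation (the SPPS formalism), which sidesteps the termwise-differentiation issue by constructing the $g_{n}$ directly rather than reading them off an identity in $\rho$. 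Your alternative route through the Goursat problem for $G$ is closer in spirit to that construction, but as written it is also only a plan. So the proposal establishes the theorem modulo the final clause; for that clause you would need either to carry out one of your two sketches in full or to invoke the formal-powers construction from \cite{KNT}.
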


Notice that from (\ref{Gxx}) and (\ref{G Fourier series}) the equality follows%
\begin{equation}
G(x,x)=\sum_{n=0}^{\infty} \frac{g_{n}(x)}{x}. \label{Gxx series}%
\end{equation}

\begin{remark}
\label{Rem q from g0}Equality \eqref{g0} shows that the potential $q(x)$ can
be recovered from $g_{0}(x)$. Indeed,
\begin{equation}
q(x)=\frac{\varphi^{\prime\prime}(0,x)}{\varphi(0,x)}=\frac{g_{0}%
^{\prime\prime}(x)}{g_{0}(x)+1}.\label{q from g0}%
\end{equation}
Moreover, the constant $h$ is also recovered directly from $g_{0}(x)$, since%
\[
h=g_{0}^{\prime}(0).
\]

\end{remark}

\subsection{Neumann series of Bessel functions representations for solutions
and their derivatives}

With the aid of Theorem \ref{Theorem G series representation} the following
series representations for the solution $\varphi(\rho,x)$ and for its
derivative are obtained.

\begin{theorem}
[\cite{KNT}]\label{Theorem NSBF} The solution $\varphi(\rho,x)$ and its
derivative with respect to $x$ admit the following series representations%
\begin{equation}
\varphi(\rho,x)=\cos\rho x+\sum_{n=0}^{\infty}(-1)^{n}g_{n}(x)j_{2n}(\rho x)
\label{phi}%
\end{equation}
and
\begin{equation}
\varphi^{\prime}(\rho,x)=-\rho\sin\rho x+\left(  h+\frac{1}{2}\int_{0}%
^{x}q(s)\,ds\right)  \cos\rho x+\sum_{n=0}^{\infty}(-1)^{n}\gamma_{n}%
(x)j_{2n}(\rho x), \label{phiprime}%
\end{equation}
where $j_{k}(z)$ stands for the spherical Bessel function of order $k$,
defined as $j_{k}(z):=\sqrt{\frac{\pi}{2z}}J_{k+\frac{1}{2}}(z)$ (and $J_{l}$
stands for the Bessel function of order $l$) (see, e.g.,
\cite{AbramowitzStegunSpF}). The coefficients $g_{n}(x)$ are those from
Theorem \ref{Theorem G series representation}, and the coefficients
$\gamma_{n}(x)$ can be calculated following a simple recurrent integration
procedure, starting with
\begin{equation}
\gamma_{0}(x)=\varphi^{\prime}(0,x)-h-\frac{1}{2}\int_{0}^{x}q(s)\,ds.
\label{gamma0}%
\end{equation}
For every $\rho\in\mathbb{C}$ both series converge pointwise. For every
$x\in\left[  0,\pi\right]  $ the series converge uniformly on any compact set
of the complex plane of the variable $\rho$, and the remainders of their
partial sums admit estimates independent of $\operatorname{Re}\rho$.
\end{theorem}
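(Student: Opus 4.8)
The plan is to read both representations off the transmutation formula \eqref{sol transmutation} combined with the Fourier--Legendre expansion \eqref{G Fourier series} of the kernel $G$; the one genuinely analytic ingredient is the classical identity
\begin{equation*}
\int_{0}^{1}P_{2n}(s)\cos(zs)\,ds=(-1)^{n}j_{2n}(z),\qquad z\in\mathbb{C},
\end{equation*}
which is the even part of $\int_{-1}^{1}P_{m}(s)e^{izs}\,ds=2i^{m}j_{m}(z)$, and the latter follows from the plane-wave expansion $e^{izs}=\sum_{m}(2m+1)i^{m}j_{m}(z)P_{m}(s)$.

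First I would treat $x\in(0,\pi]$ and insert \eqref{G Fourier series} into \eqref{sol transmutation}. Since $\sum_{n}(g_{n}(x)/x)P_{2n}(t/x)$ converges to $G(x,\cdot)$ in $L_{2}(0,x)$ while $f\mapsto\int_{0}^{x}f(t)\cos\rho t\,dt$ is a bounded linear functional on $L_{2}(0,x)$, summation and integration commute; the substitution $t=xs$ together with the identity above turns the $n$-th term into $(-1)^{n}g_{n}(x)j_{2n}(\rho x)$, which is \eqref{phi}. The value $x=0$ is immediate, since $j_{2n}(0)=\delta_{n0}$ and $g_{0}(0)=\varphi(0,0)-1=0$ by \eqref{g0}.

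For the derivative I would differentiate \eqref{sol transmutation} in $x$, obtaining
\begin{equation*}
\varphi^{\prime}(\rho,x)=-\rho\sin\rho x+G(x,x)\cos\rho x+\int_{0}^{x}\frac{\partial G}{\partial x}(x,t)\cos\rho t\,dt ,
\end{equation*}
and replace $G(x,x)$ via \eqref{Gxx}. The remaining kernel $t\mapsto\partial_{x}G(x,t)$ lies in $L_{2}(0,x)$ (a regularity property of the transmutation kernel for $q\in L_{2}$), and $\{P_{2n}(t/x)\}_{n\ge0}$ is a complete orthogonal system in $L_{2}(0,x)$, because even extension identifies $L_{2}(0,x)$ with the even subspace of $L_{2}(-x,x)$, which is spanned by the even Legendre polynomials. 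Defining $\gamma_{n}(x)$ by the expansion $\partial_{x}G(x,t)=\sum_{n}(\gamma_{n}(x)/x)P_{2n}(t/x)$ and repeating the same computation yields \eqref{phiprime}. Setting $\rho=0$ in \eqref{phiprime} and using $\int_{0}^{1}P_{2n}(s)\,ds=\delta_{n0}$ gives $\varphi^{\prime}(0,x)=h+\tfrac{1}{2}\int_{0}^{x}q(s)\,ds+\gamma_{0}(x)$, which is \eqref{gamma0}. The recurrent integration rules for the $\gamma_{n}$ (and for the $g_{n}$) I would obtain as in \cite{KNT}: substitute the ansatz \eqref{phi} into \eqref{SL equation}, use that $x\mapsto j_{2n}(\rho x)$ solves $u^{\prime\prime}+(\rho^{2}-2n(2n+1)/x^{2})u=0$ together with the three-term recurrences for spherical Bessel functions, and match the coefficients of the $j_{2n}(\rho x)$.

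It remains to justify the convergence assertions. The $L_{2}$-convergence of the two Legendre expansions gives, for every fixed $x$, $\{g_{n}(x)/\sqrt{4n+1}\},\{\gamma_{n}(x)/\sqrt{4n+1}\}\in\ell_{2}$, while the standard bound $|j_{2n}(z)|\le C\,(|z|/2)^{2n}e^{|\operatorname{Im}z|}/\Gamma(2n+3/2)$, valid for all $z\in\mathbb{C}$, makes $\sum_{n}(4n+1)\,|j_{2n}(\rho x)|^{2}$ converge and stay bounded uniformly for $x\in[0,\pi]$ and $\rho$ in a compact set. Cauchy--Schwarz then yields absolute convergence of the series in \eqref{phi} and \eqref{phiprime}, uniform convergence on compacta in $\rho$ (the tails being estimated in the same way), and---since the Bessel bound involves $\rho$ only through $|\operatorname{Im}\rho|$---remainder estimates that depend on $\operatorname{Im}\rho$ but not on $\operatorname{Re}\rho$. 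I expect the two delicate points to be (i) these uniform estimates, namely extracting enough decay from $j_{2n}$ to offset the merely $\ell_{2}$ behaviour of $g_{n},\gamma_{n}$ so that the remainder is genuinely $\operatorname{Re}\rho$-independent, and (ii) the regularity bookkeeping needed to legitimize the term-by-term differentiation in $x$ and the derivation of the $\gamma_{n}$-recurrence when $q$ is only square integrable.
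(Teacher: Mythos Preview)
The paper does not prove this theorem; immediately after the statement it writes ``We refer the reader to \cite{KNT} and \cite{KrBook2020} for the proof and additional details related to this result.'' Your outline is precisely the argument of \cite{KNT}: substitute the Fourier--Legendre expansion \eqref{G Fourier series} into the transmutation formula \eqref{sol transmutation}, invoke the identity $\int_{0}^{1}P_{2n}(s)\cos(zs)\,ds=(-1)^{n}j_{2n}(z)$, and treat $\varphi'$ by differentiating \eqref{sol transmutation} and expanding $\partial_{x}G(x,\cdot)$ in the same Legendre basis.

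One correction to your convergence paragraph. The bound you quote, $|j_{2n}(z)|\le C\,(|z|/2)^{2n}e^{|\operatorname{Im}z|}/\Gamma(2n+3/2)$, depends on $|\rho|$ through $|z|=|\rho x|$, not only on $|\operatorname{Im}\rho|$; so it cannot by itself deliver remainder estimates independent of $\operatorname{Re}\rho$, contrary to what you write two lines later. The clean route (and the one actually used in \cite{KNT}) is to keep the tail in integral form: the remainder $\sum_{n>N}(-1)^{n}g_{n}(x)j_{2n}(\rho x)$ equals $\int_{0}^{x}R_{N}(x,t)\cos\rho t\,dt$ with $R_{N}(x,\cdot)=G(x,\cdot)-\sum_{n\le N}\frac{g_{n}(x)}{x}P_{2n}(\cdot/x)$, and Cauchy--Schwarz in $L_{2}(0,x)$ together with $|\cos\rho t|\le e^{|\operatorname{Im}\rho|\,t}$ gives a bound of the form $\|R_{N}(x,\cdot)\|_{L_{2}(0,x)}\,\sqrt{x}\,e^{|\operatorname{Im}\rho|x}$, which is manifestly independent of $\operatorname{Re}\rho$ and tends to zero with $N$. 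This is exactly your ``delicate point (i)'', and it dissolves once you estimate at the level of the kernel rather than termwise in the Bessel series.
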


We refer the reader to \cite{KNT} and \cite{KrBook2020} for the proof and
additional details related to this result. The coefficients $g_{n}$ and
$\gamma_{n}$ decay as $n\to\infty$, and the decay rate is faster for smoother
potentials. Namely, the following result is valid.

\begin{proposition}
[\cite{KrT2018}]\label{Prop coeff decay} Let $q\in W_{\infty}^{p}[0,\pi]$ for
some $p\in\mathbb{N}_{0}$, i.e., the potential $q$ is $p$ times differentiable
with the last derivative being bounded on $[0,\pi]$. Then there exist
constants $c$ and $d$, independent of $N$, such that
\[
|g_{N}(x)|\leq\frac{cx^{p+2}}{(2N-1)^{p+1/2}}\qquad\text{and}\qquad|\gamma
_{N}(x)|\leq\frac{dx^{p+1}}{(2N-1)^{p-1/2}},\qquad2N\geq p+1.
\]

\end{proposition}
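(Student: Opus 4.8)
Let me think about how to prove the decay estimates for the coefficients $g_N(x)$ and $\gamma_N(x)$ in the Fourier-Legendre series expansion of the transmutation kernel.

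First, the key observation: $g_N(x)$ is (up to scaling by $x$) the Fourier-Legendre coefficient of $G(x,\cdot)$ against $P_{2N}$. Specifically, from the expansion $G(x,t) = \sum_n \frac{g_n(x)}{x} P_{2n}(t/x)$ and orthogonality of Legendre polynomials, we have
$$g_N(x) = (4N+1) \int_0^x G(x,t) P_{2N}(t/x)\, dt = (4N+1)\frac{x}{2}\int_{-1}^1 G_{\mathrm{ev}}(x, xu) P_{2N}(u)\, du$$
where I've used evenness of $G(x,\cdot)$ extended to $[-x,x]$ (the condition $\partial_t G|_{t=0}=0$ is what makes the even-index Legendre polynomials the right basis). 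So the plan is: estimate Fourier-Legendre coefficients of a function with prescribed smoothness.

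The second ingredient is the smoothness of $t \mapsto G(x,t)$. From the Goursat problem that $G$ satisfies (it solves $G_{xx} - q(x)G = G_{tt}$ with $G(x,x) = h + \frac12\int_0^x q$ and $\partial_t G|_{t=0}=0$), one reads off that if $q \in W^p_\infty$ then $G(x,\cdot) \in W^{p+2}_\infty$ in an appropriate sense — each derivative of $q$ buys two derivatives of $G$ in $t$ via the wave-equation structure, after accounting for the compatibility/boundary data. This is where I'd need to be careful: the regularity has to be tracked through the method of successive approximations (or d'Alembert-type representation) for the Goursat problem, and the $x^{p+2}$ factor should emerge from the fact that $G$ and all its relevant $t$-derivatives vanish to appropriate order as $x \to 0$ (since $G(0,0)=h$ and the kernel is built by integrating $q$ over $[0,x]$). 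Actually, this proposition is cited from \cite{KrT2018}, so presumably the detailed regularity bookkeeping is done there; the task here is to assemble it.

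The third and final ingredient is the classical decay rate of Fourier-Legendre coefficients: if $f \in W^{m}_\infty[-1,1]$ (or even just $f$ with $m$ derivatives of bounded variation), then its Legendre coefficient $c_k = (k+\frac12)\int_{-1}^1 f(u) P_k(u)\,du$ satisfies $|c_k| \lesssim k^{-m+1/2}$ or better. The sharp statement uses integration by parts against the Legendre differential operator: repeated use of the identity $\frac{d}{du}\big[(1-u^2) P_k'(u)\big] = -k(k+1) P_k(u)$ together with $\int_{-1}^1 P_k(u) g(u)\, du$ bounds and $\|P_k\|_{L^2} = \sqrt{2/(2k+1)}$, $\|P_k\|_{L^\infty}=1$. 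Carrying this out $m$ times, with $m = p+2$ for $g_N$ (giving exponent $(2N)^{-(p+2)+3/2} = (2N)^{-(p+1/2)}$, matching the stated $(2N-1)^{-(p+1/2)}$) and $m=p+1$ for $\gamma_N$ (the derivative coefficients, which satisfy an analogous but once-less-smooth expansion since $\gamma_n$ governs $\varphi'$ and the differentiation costs one order), yields exactly the claimed bounds.

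So the structure is: (1) write $g_N(x)$ as a Legendre coefficient of $u \mapsto G(x,xu)$; (2) establish that this function lies in $W^{p+2}_\infty[-1,1]$ with norm controlled by $x^{p+2}$ times a $q$-dependent constant, using the Goursat-problem regularity theory for $G$; (3) invoke the standard Legendre-coefficient decay estimate for $W^{p+2}_\infty$ functions; (4) repeat for $\gamma_N$ with $p+1$ in place of $p+2$, using the analogous series for $\varphi'$ and the one-order-rougher kernel. The main obstacle is step (2): tracking simultaneously the $p+2$ orders of $t$-smoothness \emph{and} the $x^{p+2}$ vanishing rate through the successive-approximation construction of $G$, making sure the compatibility conditions at $t=0$ and $t=x$ don't obstruct the gain of two derivatives. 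Everything else is either orthogonal-polynomial bookkeeping or a direct appeal to \cite{KNT}, \cite{KrT2018}.
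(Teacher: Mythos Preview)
The paper does not prove this proposition at all; it is quoted verbatim from \cite{KrT2018}. Your outline is the natural route and is almost certainly what that reference does, but your counting is off in two places that happen to cancel, so the argument as written is not yet a proof.

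First, the $t$-regularity of the kernel. For $q\in W_\infty^{p}$ the standard Goursat theory (successive approximations, as in Marchenko or Levitan) yields $G\in C^{p+1}$ jointly, hence $G(x,\cdot)\in W_\infty^{p+1}$, not $W_\infty^{p+2}$. The wave equation $G_{tt}=G_{xx}-q(x)G$ only ties $t$- and $x$-regularity together; it does not ``buy two derivatives per derivative of $q$''. Likewise the kernel behind $\gamma_n$ is $G_x(x,\cdot)\in W_\infty^{p}$, one order rougher.

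Second, you state the Legendre-coefficient decay as $|c_k|\lesssim k^{-m+1/2}$ but then compute with exponent $-m+3/2$; only the former is correct.

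The honest count is this. With $f(u)=G(x,xu)$ on $[-1,1]$ one has $f^{(m)}(u)=x^{m}(\partial_t^{m}G)(x,xu)$, bounded for $m=p+1$. Integrating by parts $p+1$ times via $P_k=(P_{k+1}'-P_{k-1}')/(2k+1)$ and using $\|P_{2N}\|_{L^2}\sim(2N)^{-1/2}$ gives $\bigl|\int_{-1}^{1} f\,P_{2N}\bigr|\lesssim x^{p+1}(2N)^{-p-3/2}$, whence
\[
|g_N(x)|=\Bigl|\,x\,\tfrac{4N+1}{2}\!\int_{-1}^{1} f\,P_{2N}\,\Bigr|\ \lesssim\ x^{p+2}(2N)^{-p-1/2}.
\]
Thus the factor $x^{p+2}$ is simply one $x$ from the measure $dt=x\,du$ times $x^{p+1}$ from the chain rule; no separate ``vanishing at $x=0$'' analysis is needed. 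Repeating with the kernel $G_x(x,\cdot)$ and $m=p$ gives $|\gamma_N(x)|\lesssim x^{p+1}(2N)^{-(p-1/2)}$.
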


\section{Method of solution of Problem \ref{Problem 1}}

The idea of the method was proposed in \cite{Kr2019JIIP}, and its modification
involving the integrated Gelfand-Levitan equation
(\ref{Gelfand-Levitan integrated}) instead of (\ref{Gelfand-Levitan}) was
developed in \cite[Section 13.4]{KrBook2020}. We begin by briefly recalling
that result.

From now on and without loss of generality, we suppose that $\rho_{0}=0$. This
always can be achieved by a simple shift of the potential. If originally
$\rho_{0}\neq0$, then we can consider the new potential $\widetilde
{q}(x):=q(x)-\rho_{0}^{2}$. Obviously, the eigenvalues $\left\{  \lambda
_{n}\right\}  _{n=0}^{\infty}$ and $\left\{  \nu_{n}\right\}  _{n=0}^{\infty}$
shift by the same amount, while the numbers $h$, $H$ and $\left\{  \alpha
_{n}\right\}  _{n=0}^{\infty}$ do not change. After recovering $\widetilde
{q}(x)$ from the shifted eigenvalues, one gets the original potential $q(x)$
by adding $\rho_{0}^{2}$ back.

\subsection{Main system of linear algebraic equations}

Denote%
\begin{align}
C_{km}(x)  &  =
\begin{cases}
c_{k,k}(x)+\frac{1}{(4k+1)(4k+3)}, & \text{when }m=k,\\
c_{k,k+1}(x)-\frac{1}{(4k+3)(4k+5)}, & \text{when }m=k+1,\\
c_{k,m}(x), & \text{otherwise},
\end{cases}
\label{C_km}\\
c_{0,m}(x)  &  = \left(  \frac{1}{\alpha_{0}}-\frac{1}{\pi}\right)
\frac{x\delta_{0m}}{3}+\left(  -1\right)  ^{m}\sum_{n=1}^{\infty}\left(
\frac{j_{2m}(\rho_{n}x)j_{1}(\rho_{n}x)}{\alpha_{n}\rho_{n}}-\frac
{2j_{2m}(nx)j_{1}(nx)}{\pi n}\right)  ,\label{c_0m}\\
c_{k,m}(x)  &  =\left(  -1\right)  ^{m+k}\sum_{n=1}^{\infty}\left(
\frac{j_{2m}(\rho_{n}x)j_{2k+1}(\rho_{n}x)}{\alpha_{n}\rho_{n}}-\frac
{2j_{2m}(nx)j_{2k+1}(nx)}{\pi n}\right)  ,\qquad k=1,2,\ldots, \label{c_km}%
\end{align}
and
\begin{equation}
d_{k}(x)= \left(  \frac{1}{\pi}-\frac{1}{\alpha_{0}}\right)  \frac
{x\delta_{0k}}{3}-\left(  -1\right)  ^{k}\sum_{n=1}^{\infty}\left(  \frac
{\cos\rho_{n}x\,j_{2k+1}(\rho_{n}x)}{\alpha_{n}\rho_{n}}-\frac{2\cos
nx\,j_{2k+1}(nx)}{\pi n}\right),  \label{d_k}%
\end{equation}
where $\delta_{0m}$ stands for the Kronecker delta. The series in these
definitions converge uniformly with respect to $x\in\left[  0,\pi\right]  $.
This follows from (\ref{asympt 1}) and asymptotics of spherical Bessel
functions
\[
j_{k}(t)\sim\frac{\sin(t-\frac{k\pi}{2})}{t}+O\left(  \frac{1}{t^{2}}\right)
,\quad\text{when }t\in\mathbb{R}\text{ and }t\rightarrow\infty.
\]

\begin{theorem}
[\cite{KrBook2020}]\label{Thm main system} The coefficients $g_{m}(x)$ satisfy
the system of linear algebraic equations%
\begin{equation}
\sum_{m=0}^{\infty}g_{m}(x)C_{km}(x)=d_{k}(x),\qquad\text{for all
}k=0,1,\ldots.\label{G-L-i}%
\end{equation}
For all $x\in\left[  0,\pi\right]  $ and $k=0,1,\ldots$ the series in
\eqref{G-L-i} converges.
\end{theorem}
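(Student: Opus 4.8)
The plan is to derive \eqref{G-L-i} by substituting the Fourier--Legendre expansion \eqref{G Fourier series} of $G(x,\cdot)$ into the integrated Gelfand--Levitan equation \eqref{Gelfand-Levitan integrated} and then projecting the resulting identity onto the \emph{odd} Legendre polynomials, that is, multiplying by $P_{2k+1}(t/x)$ and integrating in $t$ over $(0,x)$ for each $k=0,1,\dots$. We work under the normalization $\rho_{0}=0$ adopted at the beginning of the section, so that $\widetilde{F}$ is given by the explicit series \eqref{Ftilde}. The odd polynomials are the right test functions: both $\int_{0}^{t}G(x,\tau)\,d\tau$ and $\widetilde{F}(x,t)$ are, in their dependence on $t$, built from functions that pair with $P_{2k+1}(t/x)$ to produce precisely the spherical Bessel functions $j_{2k+1}$ occurring in \eqref{c_0m}--\eqref{d_k}, while the even part of the Legendre expansion of $G$ re-emerges through the $s$-integration against the cosines inside $\widetilde{F}$.

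To keep convergence questions separate from the algebra, I would first carry out the projection with $G$ replaced by its $N$-th partial sum $G_{N}(x,t)=\sum_{m=0}^{N}\frac{g_{m}(x)}{x}P_{2m}(t/x)$, where every manipulation is a finite exact computation. It rests on four elementary facts: the Legendre transforms
\[
\int_{0}^{x}P_{2m}(t/x)\cos\rho t\,dt=(-1)^{m}x\,j_{2m}(\rho x),\qquad\int_{0}^{x}P_{2k+1}(t/x)\sin\rho t\,dt=(-1)^{k}x\,j_{2k+1}(\rho x),
\]
both following from $\int_{-1}^{1}P_{n}(u)e^{izu}\,du=2i^{n}j_{n}(z)$ and the parity of $P_{n}$ (the first is already implicit in \eqref{sol transmutation}--\eqref{phi}); the antiderivative identity $\int_{0}^{t}P_{2m}(\tau/x)\,d\tau=\frac{x}{4m+1}\bigl(P_{2m+1}(t/x)-P_{2m-1}(t/x)\bigr)$ for $m\ge1$ and $\int_{0}^{t}P_{0}(\tau/x)\,d\tau=t$, coming from $(2n+1)P_{n}=P_{n+1}'-P_{n-1}'$ and $P_{2m\pm1}(0)=0$; the orthogonality relations $\int_{0}^{x}P_{2j+1}(t/x)P_{2k+1}(t/x)\,dt=\frac{x}{4k+3}\delta_{jk}$, $\int_{0}^{x}t\,P_{2k+1}(t/x)\,dt=\frac{x^{2}}{3}\delta_{k0}$ and $\int_{0}^{x}P_{2m}(s/x)\,ds=x\,\delta_{m0}$; and the series \eqref{Ftilde}. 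Carrying this out term by term in \eqref{Gelfand-Levitan integrated}, the free term $\int_{0}^{t}G_{N}(x,\tau)\,d\tau$ contributes only the tridiagonal expression $x\bigl(\frac{g_{k}(x)}{(4k+1)(4k+3)}-\frac{g_{k+1}(x)}{(4k+3)(4k+5)}\bigr)$, i.e. $x$ times $\sum_{m}g_{m}(x)\bigl(C_{km}(x)-c_{k,m}(x)\bigr)$; the term $\widetilde{F}(x,t)$ yields $-x\,d_{k}(x)$ after inserting \eqref{Ftilde} and using the Legendre--sine transform together with $\int_{0}^{x}t\,P_{2k+1}(t/x)\,dt=\tfrac{x^{2}}{3}\delta_{k0}$; and the convolution term $\int_{0}^{x}G_{N}(x,s)\widetilde{F}(s,t)\,ds$ yields $x\sum_{m=0}^{N}g_{m}(x)c_{k,m}(x)$ — one first does the inner $s$-integral by the Legendre--cosine transform and $\int_{0}^{x}P_{2m}(s/x)\,ds=x\delta_{m0}$, then the outer $t$-integral by the Legendre--sine transform. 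Writing $R_{N}(x,t)$ for the left-hand side of \eqref{Gelfand-Levitan integrated} with $G$ replaced by $G_{N}$, these three computations give exactly $\tfrac{1}{x}\int_{0}^{x}R_{N}(x,t)P_{2k+1}(t/x)\,dt=\sum_{m=0}^{N}g_{m}(x)C_{km}(x)-d_{k}(x)$.

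It remains to let $N\to\infty$, and this is where the only real analytic care is needed. Since \eqref{G Fourier series} converges to $G(x,\cdot)$ in $L_{2}(0,x)$ by Theorem \ref{Theorem G series representation}, Cauchy--Schwarz shows that $\int_{0}^{t}G_{N}(x,\tau)\,d\tau\to\int_{0}^{t}G(x,\tau)\,d\tau$ and $\int_{0}^{x}G_{N}(x,s)\widetilde{F}(s,t)\,ds\to\int_{0}^{x}G(x,s)\widetilde{F}(s,t)\,ds$ uniformly in $t\in[0,x]$ (using the continuity, hence boundedness in $t$ of the $L_{2}(0,x)$-norm, of $\widetilde{F}$), so $R_{N}(x,\cdot)\to0$ in $L_{2}(0,x)$; as $f\mapsto\tfrac{1}{x}\int_{0}^{x}f(t)P_{2k+1}(t/x)\,dt$ is a bounded functional on $L_{2}(0,x)$, the partial sums $\sum_{m=0}^{N}g_{m}(x)C_{km}(x)$ converge, with limit $d_{k}(x)$. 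This yields simultaneously the convergence asserted in the theorem and the identity \eqref{G-L-i}. The single genuinely infinite interchange used above — swapping the sum over $n$ in \eqref{Ftilde} with the $t$- and $s$-integrations, needed to recognize the outcomes as $d_{k}(x)$ and $c_{k,m}(x)$ — is justified by the uniform and absolute convergence of that series and of the series \eqref{c_0m}--\eqref{d_k}, recorded just before the statement of the theorem via \eqref{asympt 1} and the asymptotics of spherical Bessel functions. The main obstacle is thus organizational rather than computational: since only $L_{2}$- rather than uniform convergence of \eqref{G Fourier series} is available and \eqref{Gelfand-Levitan integrated} is not of Fredholm type, one must route the passage to the limit through the partial sums $G_{N}$ instead of interchanging an infinite sum with the integrations directly.
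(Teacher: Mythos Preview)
Your proposal is correct and follows essentially the approach indicated in the paper (and carried out in \cite{KrBook2020}): substitute the Fourier--Legendre expansion \eqref{G Fourier series} into the integrated Gelfand--Levitan equation \eqref{Gelfand-Levitan integrated} and project onto the odd Legendre polynomials $P_{2k+1}(t/x)$. The identifications of the three projected pieces with $x\sum_m g_m(C_{km}-c_{k,m})$, $-x\,d_k$, and $x\sum_m g_m c_{k,m}$ are all accurate, and your routing of the limit $N\to\infty$ through the partial sums $G_N$ using only the $L_2(0,x)$-convergence of \eqref{G Fourier series} is a clean and legitimate way to handle the non-Fredholm structure of \eqref{Gelfand-Levitan integrated}; the paper's parallel proof of Theorem~\ref{Thm main system alt} achieves the same end by interpreting the coefficients as $\ell_2$-Fourier coefficients with respect to the orthonormal Legendre basis \eqref{basis pm}, but the two justifications are interchangeable.
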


It is of crucial importance the fact that for recovering the potential $q$ as
well as the constant $h$ it is not necessary to compute many coefficients
$g_{m}(x)$ (that would be equivalent to approximate solution of the integrated
Gelfand-Levitan equation) but instead the sole $g_{0}(x)$ is sufficient for
this purpose, see Remark \ref{Rem q from g0} as well as Example \ref{Ex2}
where we illustrate with numerical results the advantage of recovering $q$
from $g_{0}$ and not from the kernel $G$. The system (\ref{G-L-i}) is obtained
by using the integrated Gelfand-Levitan equation
(\ref{Gelfand-Levitan integrated}). As we show below, the results are even
better and the stability of the method can be proved if the system of linear
algebraic equations for the coefficients $g_{m}(x)$ is obtained from the
Gelfand-Levitan equation (\ref{Gelfand-Levitan}) where the representation
(\ref{F alt}) for the input Gelfand-Levitan kernel $F(x,t)$ is used. Since the
representation (\ref{F alt}) requires the knowledge of the parameter $\omega$
we show first how it can be computed.

\subsection{Recovery of $\omega$ and of the characteristic function }

\label{subsect recovery hn}

For all $k=0,1,\ldots$ from (\ref{bc1}) and Theorem \ref{Theorem NSBF} we have
that%
\begin{equation}
\varphi^{\prime}(\rho_{k},\pi)+H\varphi(\rho_{k},\pi)=-\rho_{k}\sin\rho_{k}%
\pi+\omega\cos\rho_{k}\pi+\sum_{n=0}^{\infty}(-1)^{n}\bigl(\gamma_{n}%
(\pi)+Hg_{n}(\pi)\bigr)j_{2n}(\rho_{k}\pi)=0.\label{first system}%
\end{equation}
Let us denote
\[
h_{n}:=\gamma_{n}(\pi)+Hg_{n}(\pi),\qquad n\geq0.
\]

Later on we show that the coefficients $h_{n}$ are necessary components for
the solution of Problem \ref{Problem 2}. Also they can be used to perform the
\textquotedblleft flipping of the potential\textquotedblright, i.e., to
transform Problem 1 into the inverse problem for the potential $q(\pi-x)$
having the parameters $h$ and $H$ interchanged, see Subsection
\ref{Subsect reverse interval} for details.

Notice that due to the supposition $\rho_{0}=0$, we have that
\[
\varphi^{\prime}(0,\pi)+H\varphi(0,\pi)=0.
\]
From Theorem \ref{Theorem NSBF} it follows that
\[
\varphi^{\prime}(0,\pi)+H\varphi(0,\pi)=\gamma_{0}(\pi)+\omega_{1}+H(g_{0}%
(\pi)+1) = \gamma_{0}(\pi)+Hg_{0}(\pi) + \omega_{1}+H = h_{0} + \omega.
\]
Thus,
\begin{equation}
\omega= -h_{0}. \label{omega = -h0}%
\end{equation}
This equality coincides with (\ref{first system}) for $k=0$, while for
$k=1,2,\ldots$ equality (\ref{first system}) can be written in the form
\begin{equation}
h_{0}\cos\rho_{k}\pi-\sum_{n=0}^{\infty}(-1)^{n}h_{n}j_{2n}(\rho_{k}\pi
)=-\rho_{k}\sin\rho_{k}\pi,\quad k=1,2,\ldots. \label{syst for hn}%
\end{equation}

Hence the numbers $h_{n}$, including $h_{0}=-\omega$, can be approximated by
solving the system%
\begin{equation}
h_{0}\cos\rho_{k}\pi-\sum_{n=0}^{N_{1}-1}(-1)^{n}h_{n}j_{2n}(\rho_{k}\pi
)=-\rho_{k}\sin\rho_{k}\pi,\quad k=1,\ldots,N_{1}.\label{syst1}%
\end{equation}

Notice that the recovery of the coefficients $h_{n}$ leads to the recovery of
the characteristic function of the Sturm-Liouville problem (\ref{SL equation}%
), (\ref{bc1}), $\Phi(\rho)=\varphi^{\prime}(\rho,\pi)+H\varphi(\rho,\pi)$ for
all $\rho$:%

\[
\Phi(\rho)=-\rho\sin\rho\pi+\omega\cos\rho\pi+\sum_{n=0}^{\infty}(-1)^{n}%
h_{n}j_{2n}(\rho\pi),
\]
and for its calculation the knowledge of $H$ is superfluous.

\begin{remark}\label{Rmk Alternative omega}
Solution of the system \eqref{syst1} leads to the recovery of the parameter
$\omega$, due to \eqref{omega = -h0}. Notice that this way of obtaining
$\omega$ does not involve the asymptotics of the eigenvalues.
\end{remark}

The system \eqref{syst1} can be ill-conditioned for large values of $N_{1}$
(say, for more than 30 eigenvalues). Of course,\ when solving an inverse
spectral problem one expects to make use of all known eigenvalues. Thus
discarding some of them is not an option. To avoid the problem of the
ill-conditioning, one can look for less coefficients $h_{n}$ than the number
of the given eigenvalues $\rho_{k}$, i.e., to consider an overdetermined
system
\begin{equation}
h_{0}\cos\rho_{k}\pi-\sum_{n=0}^{N_{1}^{o}}(-1)^{n}h_{n}j_{2n}(\rho_{k}%
\pi)=-\rho_{k}\sin\rho_{k}\pi,\quad k=1,\ldots,N_{1} \label{syst1 overdet}%
\end{equation}
choosing for the number of coefficients $N_{1}^{o}+1$ such value that the
condition number of the coefficient matrix remains relatively small. The
system can be solved using the Moore-Penrose pseudoinverse. Note that the
exact coefficients $h_{n}$ decay rather fast (see Proposition
\ref{Prop coeff decay}), so taking less approximate coefficients does not
present any problem.

\begin{remark}
\label{Rmk alternate hn} As it follows from \eqref{bc1} and
\eqref{sol transmutation},
\begin{equation}
\label{moments problem1}0=\varphi^{\prime}(\rho_{k},\pi)+H\varphi(\rho_{k}%
,\pi) = -\rho_{k}\sin\rho_{k}\pi+ \omega\cos\rho_{k}\pi+ \int_{0}^{\pi
}\bigl(G_{x}^{\prime}(\pi,t) + HG(\pi,t)\bigr)\cos\rho_{k}t\,dt.
\end{equation}

Let $G:[-\pi,\pi]\rightarrow\mathbb{R}$ be an even function such that
$G(t)=G_{x}^{\prime}(\pi,t)+HG(\pi,t)$ for $t\in\lbrack0,\pi]$. Then
\eqref{moments problem1} can be written as
\begin{equation}
\int_{-\pi}^{\pi}G(t)e^{i\rho_{k}t}\,dt=2\rho_{k}\sin\rho_{k}\pi-2\omega
\cos\rho_{k}\pi, \label{moments problem2}%
\end{equation}
and equality \eqref{moments problem2} holds for all values $\pm\rho_{k}$,
$k\geq0$. Such problem is known as a non-harmonic moments problem
\cite{AvdoninIvanov}, \cite{Russell1967}, \cite{Young2001}. Searching the
solution of the problem \eqref{moments problem2} using the basis of Legendre
polynomials results in the system \eqref{omega = -h0}, \eqref{syst for hn}.

One can think of another way of solving the problem \eqref{moments problem2}
using the fact that the system $\{1,t\}\cup\{e^{\pm i\rho_{k}t}\}_{k=1}%
^{\infty}$ is a Riesz basis in $L_{2}(-\pi,\pi)$. So the function $G$ can be
approximately recovered by the least squares method, see \cite[\S 5]{Mihlin}.
Taking into account the parity of the function $G$, the minimization problem
reduces to $\min_{a_{0},\ldots,a_{n}}\left\Vert G(t)-\sum_{k=0}^{n}a_{k}%
\cos\rho_{k}t\right\Vert _{L_{2}(0,\pi)}$, and its solution can be found from
the linear system
\begin{equation}
\sum_{k=0}^{n}a_{k}\left(  \cos\rho_{k}t,\cos\rho_{m}t\right)  _{L_{2}(0,\pi
)}=\left(  G(t),\cos\rho_{m}t\right)  _{L_{2}(0,\pi)}=\rho_{m}\sin\rho_{m}%
\pi-\omega\cos\rho_{m}\pi,\quad m=0,\ldots,n. \label{moments problem approx}%
\end{equation}
The system \eqref{moments problem approx} is numerically stable, possessing\ a
small and uniformly bounded with respect to $n$ condition number, and its
solution provides an approximation $G_{n}(t)=\sum_{k=0}^{n}a_{k}\cos\rho_{k}t$
known to converge to the exact function $G(t)$ in the $L_{2}(0,\pi)$ space. A
similar idea was implemented in \cite{Rundell Sacks}. Once the coefficients
$a_{k}$ are found, one can obtain approximate values of the coefficients
$h_{m}$ as well,
\[%
\begin{split}
h_{m}  &  =(4m+1)\int_{0}^{\pi}G(t)P_{2m}\left(  \frac{t}{\pi}\right)
\,dt\approx(4m+1)\sum_{k=0}^{n}a_{k}\int_{0}^{\pi}P_{2m}\left(  \frac{t}{\pi
}\right)  \cos\rho_{k}t\,dt\\
&  =(-1)^{m}(4m+1)\pi\sum_{k=0}^{n}a_{k}j_{2m}(\rho_{k}\pi),\qquad
m=0,1,\ldots
\end{split}
\]

Even though such scheme is numerically stable and approximate coefficients
$h_{m}$ are guaranteed to converge to the exact ones, the convergence is slow
and in practice the approach based directly on the system
\eqref{syst1 overdet} produces more accurate results. The main reason is that
the coefficients $a_{k}$ decay slowly, as $1/k^{2}$, resulting in a large
error in the approximate coefficients $h_{m}$.
\end{remark}

In the next section we develop an additional possibility for recovering the
parameter $\omega$ based on a more standard approach assuming the knowledge of
a sufficient amount of spectral data allowing an efficient use of their
asymptotics. It can be applied for calculating together with  $\omega$ some
other asymptotic parameters of the spectral data.

\subsection{Recovery of the parameter $\omega$ and coefficients of the
asymptotic expansions of eigenvalues $\rho_{n}$ and norming constants
$\alpha_{n}$}

\label{Subsect recovery omega} As it is well known (see, e.g., \cite{Rundell
Sacks}), the parameter $\omega$, in principle, can be recovered using the
asymptotics of the eigenvalues. Indeed, it immediately follows from
\eqref{asympt 1} that
\[
\omega=\pi\lim_{n\rightarrow\infty}n(\rho_{n}-n).
\]
Note that due to \eqref{asympt 1} the sequence $\left\{  n(\rho_{n}%
-n)-\frac{\omega}{\pi}\right\}  _{n=0}^{\infty}=\{k_{n}\}_{n=0}^{\infty}$
belongs to $\ell_{2}$. Suppose that only a finite set of eigenvalues $\rho
_{0},\ldots,\rho_{N_{1}}$ is given. Then an approximate value of the parameter
$\omega$ can be found by minimizing the $\ell_{2}$ norm of the sequence
$\left\{  n(\rho_{n}-n)-\frac{\omega}{\pi}\right\}  _{n=N_{s}}^{N_{1}}$. Here
$N_{s}$ is some parameter between $1$ and $N_{1}$ chosen to skip several first
eigenvalues which can differ a lot from the asymptotic formula. One can take,
e.g., $N_{s}=[N_{1}/2]$, and obtain that
\begin{equation*}
\omega\approx\operatorname*{arg\,min}_{\omega}\sum_{n=[N_{1}/2]}^{N_{1}}\left(
n(\rho_{n}-n)-\frac{\omega}{\pi}\right)  ^{2}.
\end{equation*}

Better accuracy can be achieved though by taking into account higher order
asymptotic terms for the eigenvalues. Recall \cite[Remark 1.1.1]{Yurko2007}
that for $q\in W_{2}^{N}$, $N\geq1$, one has
\begin{equation}
\rho_{n}=n+\sum_{j=1}^{N+1}\frac{\omega^{(j)}}{n^{j}}+\frac{\kappa_{n}%
}{n^{N+1}}\qquad\text{and}\qquad\alpha_{n}=\frac{\pi}{2}+\sum_{j=1}^{N+1}%
\frac{\omega^{(j,+)}}{n^{j}}+\frac{\chi_{n}}{n^{N+1}},\label{rn refined}%
\end{equation}
where $\omega^{(1)}=\frac{\omega}{\pi}$, $\omega^{(2k)}=0$, $\omega
^{(2k-1,+)}=0$, $k>0$ and $\{\kappa_{n}\}_{n=0}^{\infty}\in\ell_{2}$,
$\{\chi_{n}\}_{n=0}^{\infty}\in\ell_{2}$. The first eigenvalues and norming
constants can be rather distant from the asymptotic formulas (obtained by
discarding the remainder terms $\kappa_{n}/n^{N+1}$ and $\chi_{n}/n^{N+1}$),
but higher index eigenvalues and norming constants are quite close. So one can
recover the parameter $\omega$ by finding approximate coefficients
$\omega^{(2j-1)}$, $1\leq j\leq K$ from the best fit problem
\begin{equation}
\sum_{j=1}^{K}\frac{\omega^{(2j-1)}}{n^{2j-1}}=\rho_{n}-n,\qquad N_{s}\leq
n\leq N_{1}.\label{best fit for rhon}%
\end{equation}
Since the smoothness of the potential is unknown beforehand, the parameter
$K<N_{1}-N_{s}$ is chosen such that the least squares error of the fit for $K$
is significantly better than the least squares error for $K-1$. In practice,
depending on the potential, an optimal value results to be 2 or 3. For the
parameter $N_{s}$, again, one can choose $[N_{1}/2]$.

Suppose that also a finite set of norming constants $\alpha_{0},\ldots
,\alpha_{N_{1}}$ is given. Similarly, the approximate coefficients
$\omega^{(2j,+)}$, $1\leq j\leq K-1$ can be found from the best fit problem
\begin{equation}
\sum_{j=1}^{K-1}\frac{\omega^{(2j,+)}}{n^{2j}}=\alpha_{n}-\frac{\pi}{2},\qquad
N_{s}\leq n\leq N_{1}. \label{best fit for alphan}%
\end{equation}

\begin{remark}
Taking into account that the residual sequences $\{\kappa_{n}\}_{n=0}^{\infty
}$ and $\{\chi_{n}\}_{n=0}^{\infty}$ belong to the $\ell_{2}$ space, one can
obtain approximate asymptotic coefficients $\omega^{(2j-1)}$, $1\leq j\leq K$
and $\omega^{(2j,+)}$, $1\leq j\leq K-1$ by minimizing the $\ell_{2}$ norms of
the tails of these sequences, i.e., from the least squares problems
\[
\min_{\omega^{(1)},\ldots,\omega^{(2K-1)}}\sum_{n=N_{s}}^{N_{1}}%
n^{4K-2}\biggl[\rho_{n}-n-\sum_{j=1}^{K}\frac{\omega^{(2j-1)}}{n^{2j-1}%
}\biggr]^{2}%
\]
and
\[
\min_{\omega^{(2,+)},\ldots,\omega^{(2K-2,+)}}\sum_{n=N_{s}}^{N_{1}}%
n^{4K-2}\biggl[\alpha_{n}-\frac{\pi}{2}-\sum_{j=1}^{K-1}\frac{\omega^{(2j,+)}%
}{n^{2j}}\biggr]^{2}.
\]
However, the numerical performance of the method based on
\eqref{best fit for rhon} and \eqref{best fit for alphan} was slightly better.
\end{remark}

\begin{remark}
\label{Remark add asymptotic} The coefficients $\omega^{(2j-1)}$, $1\leq j\leq
K$ and $\omega^{(2j,+)}$, $1\leq j\leq K-1$, if computed on the first step, as
described above, are not required but turn out useful when computing the
coefficients of the system \eqref{G-L-i}. Indeed, suppose we are given a
finite set of the spectral data $\left\{  \rho_{n},\,\alpha_{n}\right\}
_{n=0}^{N_{1}}$. Truncating the series \eqref{C_km}--\eqref{d_k} at $n=N_{1}$
is equivalent to taking the missing eigenvalues and norming constants to be
$\rho_{n}=n$ and $\alpha_{n}=\pi/2$, $n>N_{1}$.

In order to compute the coefficients $C_{km}(x)$ and $d_{k}(x)$ more
accurately one can add to this set an arbitrarily large number of the
\textquotedblleft asymptotic spectral data\textquotedblright\
\[
\biggl\{\rho_{n}=n+\sum_{j=1}^{K}\frac{\omega^{(2j-1)}}{n^{2j-1}},\,\alpha
_{n}=\frac{\pi}{2}+\sum_{j=1}^{K-1}\frac{\omega^{(2j,+)}}{n^{2j}%
}\biggr\}_{n=N_{1}+1}^{M},
\]
see \eqref{rn refined}. The value of $M$ can be chosen very large.
\end{remark}

Now, disposing of two alternative procedures for computing the parameter
$\omega$ we are in a position to obtain  the new system of linear algebraic
equations for the coefficients $g_{n}(x)$ and prove the stability of the
numerical method based on it.

\subsection{Alternative main system of linear algebraic equations}

\label{Subsect Alternative system} A system similar to \eqref{G-L-i} can be
obtained using the representation \eqref{F alt} instead of \eqref{Ftilde}. An
advantage of \eqref{F alt} is in preserving the Fredholm form of the
Gelfand-Levitan equation and at the same time resolving the problem of slow
convergence of the series \eqref{F} near $x=t=\pi$. Moreover, the structure of
the representation \eqref{F alt} extends further the idea discussed in Remark
\ref{Remark add asymptotic} by taking into account the whole infinite series
of \textquotedblleft asymptotic spectral data\textquotedblright\ and not just
some (large) number of terms. In \eqref{F alt} only the first order
approximations are used, so still some number of \textquotedblleft asymptotic
spectral data\textquotedblright\ may be added explicitly. Nevertheless, the
proposed modification based on \eqref{F alt} allows one to save the
computational cost of dealing with thousands of terms in
\eqref{C_km}--\eqref{d_k} which are necessary to achieve a comparable accuracy
when using the truncated system \eqref{G-L-i}. And working with the original
Gelfand-Levitan equation allows us to prove the stability results for the
approximate solutions obtained by truncating the infinite system.

Let us denote
\begin{align}
\widetilde c_{km}(x)  &  = -\frac{\omega x}{8\pi} \left(  \frac{\delta
_{m,k-1}}{(2k-3/2)_{3}} - \frac{2\delta_{m,k}}{(2k-1/2)_{3}} + \frac
{\delta_{m,k+1}}{(2k+1/2)_{3}}\right) \nonumber\\
&  \quad+ (-1)^{k+m}\sum_{n=1}^{\infty}\left[  \frac{j_{2k}(\rho_{n} x)
j_{2m}(\rho_{n} x)}{\alpha_{n}} - \frac{2 j_{2k}(nx) j_{2m}(nx)}{\pi} \right.
\label{c_km tilde}\\
&  \quad+ \left.  \frac{2\omega}{\pi^{2} n}\left(  x j_{2k}(nx) j_{2m+1}(nx) +
x j_{2k+1}(nx)j_{2m}(nx)- \frac{2(k+m)j_{2k}(nx) j_{2m}(nx)}{n}\right)
\right]  ,\nonumber\\
\widetilde C_{0m}(x)  &  = \left(  \frac1{\alpha_{0}} - \frac1\pi
+\frac{2\omega x^{2}}{3\pi^{2}}\right)  \delta_{0m} + \frac{2\omega x^{2}%
}{15\pi^{2}}\delta_{1m} + \widetilde c_{0m}(x),\label{C_0m tilde}\\
\widetilde C_{1m}(x)  &  = \frac{2\omega x^{2}}{15\pi^{2}}\delta_{0m} +
\widetilde c_{1m}(x),\label{C_1m tilde}\\
\widetilde C_{km}(x)  &  = \widetilde c_{km}(x),\qquad k=2,3,\ldots,
\ m\in\mathbb{N}_{0}, \label{C_km tilde}%
\end{align}
and
\begin{equation}
\label{d_k til}%
\begin{split}
\widetilde d_{k}(x)  &  = -\left(  \frac1{\alpha_{0}} - \frac1\pi
+\frac{4\omega x^{2}}{3\pi^{2}}-\frac{\omega x}{\pi}\right)  \delta_{k0} -
\frac{2\omega x^{2}}{15\pi^{2}}\delta_{k1}\\
&  \quad-(-1)^{k}\sum_{n=1}^{\infty}\left[  \frac{\cos\rho_{n}x}{\alpha_{n}%
}j_{2k}(\rho_{n}x)-\frac{2\cos nx}{\pi}j_{2k}(nx)\right. \\
&  \quad+ \left.  \frac{2\omega}{\pi^{2} n} \left(  x \sin nx j_{2k}(nx) +
x\cos nx j_{2k+1}(nx) - \frac{2k}{n} \cos nx j_{2k}(nx)\right)  \right]  ,
\end{split}
\end{equation}
where $\delta_{k,m}$ stands for the Kronecker delta and $(k)_{m}$ is the
Pochhammer symbol.

Then the following result is valid.

\begin{theorem}
\label{Thm main system alt} The coefficients $g_{m}(x)$ satisfy the system of
linear algebraic equations
\begin{equation}
\frac{g_{k}(x)}{(4k+1)x} + \sum_{m=0}^{\infty}g_{m}(x)\widetilde
C_{km}(x)=\widetilde d_{k}(x),\qquad k=0,1,\ldots. \label{G-L-alt}%
\end{equation}
For all $x\in\left[  0,\pi\right]  $ and $k=0,1,\ldots$ the series in
\eqref{G-L-alt} converges.
\end{theorem}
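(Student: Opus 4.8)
The plan is to start from the Gelfand--Levitan equation \eqref{Gelfand-Levitan} in its genuine Fredholm form, substitute the representation \eqref{F alt} for $F(x,t)$ and the Fourier--Legendre expansion \eqref{G Fourier series} for $G(x,\cdot)$, and then project onto the orthogonal basis $\{P_{2k}(t/x)\}_{k\ge0}$ of $L_2(0,x)$. Concretely, fix $x\in(0,\pi]$, multiply \eqref{Gelfand-Levitan} by $\frac{4k+1}{x}P_{2k}(t/x)$ and integrate in $t$ over $(0,x)$. The first term $G(x,t)$ produces, by orthogonality of the Legendre polynomials, exactly the isolated coefficient $\frac{g_k(x)}{(4k+1)x}$ that appears on the left-hand side of \eqref{G-L-alt}. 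The third term $\int_0^x F(t,s)G(x,s)\,ds$, after interchanging the order of integration (justified by the uniform/absolute convergence of the series in \eqref{F alt} and the $L_2$-convergence of \eqref{G Fourier series}) and inserting \eqref{G Fourier series} for $G(x,s)$, yields $\sum_m g_m(x)$ times a double Legendre integral of $F$; this is what must be shown to equal $\sum_m g_m(x)\widetilde C_{km}(x)$. The second term $F(x,t)$, projected the same way, must be shown to equal $-\widetilde d_k(x)$.

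The technical core is therefore the explicit evaluation of two families of integrals involving the ingredients of \eqref{F alt}: (i) $\int_0^x \cos(\rho t)\,P_{2k}(t/x)\,dt$, which by the classical formula $\int_0^x\cos(\rho t)P_{2k}(t/x)\,dt=(-1)^k x\, j_{2k}(\rho x)$ accounts for every $\cos\rho_n x$- and $\cos nx$-term in \eqref{F alt} and produces the spherical Bessel functions $j_{2k}(\rho_n x)$, $j_{2k}(nx)$ visible in \eqref{c_km tilde}--\eqref{d_k til}; (ii) the integrals of the polynomial correction $\frac{2\omega}{\pi^2 n}\bigl(x\sin nx\cos nt+t\sin nt\cos nx\bigr)$ and of the closed-form tail $\frac1{\alpha_0}-\frac1\pi-\frac{\omega}{\pi^2}(\pi\max\{x,t\}-x^2-t^2)$ against $P_{2k}(t/x)$. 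For (ii) one writes $t\cos nt=\frac{d}{dn}\sin nt$ (or integrates by parts), so that $\int_0^x t\cos(nt)P_{2k}(t/x)\,dt$ is expressed through $j_{2k}(nx)$, $j_{2k\pm1}(nx)$ and $\frac1n j_{2k}(nx)$ — precisely the combination $x j_{2k}(nx)j_{2m+1}(nx)+x j_{2k+1}(nx)j_{2m}(nx)-\frac{2(k+m)}{n}j_{2k}(nx)j_{2m}(nx)$ in \eqref{c_km tilde}. The $\max\{x,t\}$-term contributes, after the double integration against $P_{2k}$ and $P_{2m}$, only finitely many nonzero entries because $\int_0^x t^j P_{2k}(t/x)\,dt$ vanishes for $j<2k$; this is the source of the Kronecker-delta terms $\delta_{m,k-1},\delta_{m,k},\delta_{m,k+1}$ with Pochhammer denominators $(2k\mp3/2)_3$, $(2k-1/2)_3$, $(2k+1/2)_3$ in \eqref{c_km tilde} and of the explicit $\delta_{0m},\delta_{1m}$ corrections in \eqref{C_0m tilde}--\eqref{C_1m tilde} and \eqref{d_k til}. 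The special treatment of the $n=0$ index (recall the standing assumption $\rho_0=0$) produces the $\frac1{\alpha_0}-\frac1\pi$ terms; the $\delta_{0m}x/3$-type factors come from $\int_0^x t\,P_{2k}(t/x)\,dt$ being nonzero only for $k=0,1$.

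Once these integral identities are in hand, one collects terms: the $n$-sum of the projected $F(x,t)$ and of the projected kernel $\int_0^x F(t,s)G(x,s)\,ds$ reassemble, term by term in $n$, into the bracketed expressions of \eqref{c_km tilde} and \eqref{d_k til}, while the finitely many polynomial contributions assemble into the $\widetilde C_{0m},\widetilde C_{1m}$ corrections and the $\delta_{k0},\delta_{k1}$ terms of $\widetilde d_k$; the $\frac{\omega x}{\pi}\delta_{k0}$ term in \eqref{d_k til} comes from the $\frac{g_k(x)}{(4k+1)x}$-companion $G(x,t)$-projection combined with $G(x,x)$ via \eqref{Gxx}, or equivalently from the $x^2$ vs.\ $\pi\max\{x,t\}$ bookkeeping at $k=0$. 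The convergence assertion for the series in \eqref{G-L-alt} follows from the uniform convergence of the series in \eqref{F alt} (stated in the excerpt), the decay of $g_m(x)$ from Proposition \ref{Prop coeff decay}, and the asymptotics $j_k(t)=O(1/t)$ recorded above, exactly as for the series in \eqref{C_km}--\eqref{d_k}.

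The main obstacle I anticipate is purely computational bookkeeping: correctly evaluating $\int_0^x t\cos(nt)P_{2k}(t/x)\,dt$ and the double integrals $\int_0^x\!\int_0^x (\pi\max\{x,t\}-x^2-t^2)\frac{P_{2k}(t/x)}{x}\frac{P_{2m}(s/x)}{x}\,dt\,ds$ (splitting the $\max$ at $t=s$), and then matching every resulting Kronecker-delta coefficient and Pochhammer denominator against \eqref{c_km tilde}--\eqref{d_k til} without sign or index errors — in particular keeping track of the $(-1)^{k+m}$ phases and the separate low-index ($k=0,1$) corrections. The analytic content (interchange of sum and integral, $L_2$-projection) is routine given the uniform convergence already established in \eqref{F alt} and Theorem \ref{Theorem G series representation}.
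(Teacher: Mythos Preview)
Your approach is the paper's: substitute \eqref{F alt} and \eqref{G Fourier series} into \eqref{Gelfand-Levitan}, project onto $P_{2k}(t/x)$, and evaluate the resulting Legendre integrals (the paper handles the $\max$ term via the three-term Legendre recursion, producing exactly the $\delta_{m,k\pm1}$ pattern you anticipate). Two small corrections: the projection should be against $\frac{1}{x}P_{2k}(t/x)$, not $\frac{4k+1}{x}P_{2k}(t/x)$, in order to land on $\frac{g_k(x)}{(4k+1)x}$; and your convergence argument via Proposition~\ref{Prop coeff decay} presupposes $q\in W_\infty^p$, whereas the paper establishes convergence for general $q\in L_2$ by recognizing $\sum_m g_m(x)\widetilde C_{km}(x)$ as the $\ell_2$ inner product of the Fourier coefficients of $G(x,\cdot)$ and of $s\mapsto\int_0^1 F(s,x\tau)P_{2k}(\tau)\,d\tau$ with respect to the orthonormal basis $\{\sqrt{(4m+1)/x}\,P_{2m}(t/x)\}_{m\ge0}$.
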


\begin{proof}
The idea is to substitute expressions (\ref{F alt}) and
(\ref{G Fourier series}) into (\ref{Gelfand-Levitan}). Consider
first
\begin{equation}\label{Integral F alt}
\begin{split}
\int_{0}^{x} P_{2m}\left(  \frac{s}{x}\right) & F(s,t)\frac{ds}{x} =\left(
\frac{1}{\alpha_{0}}-\frac{1}{\pi}+\frac{\omega t^2}{\pi^2}\right)  \int_{0}^{x}
P_{2m}\left(  \frac{s}{x}\right)  \frac{ds}{x} \\
& \quad + \frac{\omega}{\pi^2}\int_0^x s^2 P_{2m}\left(  \frac{s}{x}\right) \frac{ds}{x} -\frac{\omega}{\pi}\int_0^x  P_{2m}\left(  \frac{s}{x}\right)  \max\{s,t\} \frac{ds}{x}\\
&\quad +\sum_{n=1}^{\infty}\left[  \frac{\cos\rho_{n}t}{\alpha_{n}}\int
_{0}^{x}P_{2m}\left(  \frac{s}{x}\right)  \cos\rho_{n}s\frac{ds}{x} -\frac{\cos
nt}{\alpha_{n}^{0}}\int_{0}^{x}P_{2m}\left(  \frac{s}{x}\right)  \cos
ns\frac{ds}{x}\right.\\
& \quad+\left.\frac{2\omega}{\pi^2n}\left( \cos nt \int_0^x  P_{2m}\left(  \frac{s}{x}\right) \cdot s\sin ns  \frac{ds}{x} + t\sin nt \int_0^x  P_{2m}\left(  \frac{s}{x}\right)\cos ns  \frac{ds}{x}\right) \right].
\end{split}
\end{equation}
The change of the sum and of the integration is possible due to uniform convergence of the series for $F$.
Let us calculate the integrals appearing in \eqref{Integral F alt}. Using the orthogonality of the Legendre polynomials we obtain
\begin{align*}
\int_0^x P_{2m}\left(\frac sx\right)\,\frac{ds}x &= \delta_{0m}, \\
\int_0^x s^2 P_{2m}\left(\frac sx\right)\,\frac{ds}x &= x^2 \int_0^1 \tau^2 P_{2m}(\tau)\,d\tau =
\begin{cases}
x^2/3, & m=0,\\
2x^2/15, & m=1,\\
0, & m>1.
\end{cases}
\end{align*}
Recall that
\[
\int_0^x P_{2m}\left(\frac sx\right)\cos bs\,\frac{ds}x = (-1)^m j_{2m}(bx).
\]
Differentiating both sides with respect to $b$ we obtain that
\begin{equation*}
\int_0^x P_{2m}\left(\frac sx\right) s\sin bs\,\frac{ds}x = (-1)^m \left(x j_{2m+1}(bx) - \frac {2m}{b} j_{2m}(bx)\right).
\end{equation*}
Now,
\begin{equation*}
\int_0^x  \max\{s,t\} P_{2m}\left(\frac sx\right)\frac{ds}x = t\int_0^t P_{2m}\left(\frac sx\right)\frac{ds}x + \int_t^x \frac sx P_{2m}\left(\frac sx\right)\,ds.
\end{equation*}
Let $m>0$. Using the identities (note that $P_{2n\pm 1}(0)=0$)
\begin{equation*}
\int_{0}^{t}P_{2n}\left(  \frac{\tau}{x}\right)  d\tau=\frac{x}{4n+1}\left(
P_{2n+1}\left(  \frac{t}{x}\right)  -P_{2n-1}\left(  \frac{t}{x}\right)
\right)  ,\qquad n=1,2,\ldots
\end{equation*}
and
\begin{equation*}
xP_n(x) = \frac{n+1}{2n+1} P_{n+1}(x) + \frac{n}{2n+1} P_{n-1}(x)
\end{equation*}
we obtain that
\begin{align*}
t\int_0^t  P_{2m}\left(\frac sx\right)\frac{ds}x &= \frac{t}{4m+1}\left(P_{2m+1}\left(\frac tx\right)-P_{2m-1}\left(\frac tx\right)\right) \\
& = \frac{(2m+2)x P_{2m+2}(t/x)}{(4m+1)(4m+3)} - \frac{xP_{2m}(t/x)}{(4m-1)(4m+3)} - \frac{(2m-1)xP_{2m-2}(t/x)}{(4m-1)(4m+1)}.
\end{align*}
Similarly,
\begin{align*}
\int_t^x \frac sx P_{2m}\left(\frac sx\right)\,ds &= \int_t^x \left(\frac{2m+1}{4m+1} P_{2m+1}\left(\frac sx\right)+\frac{2m}{4m+1}P_{2m-1}\left(\frac sx\right)\right)\,ds \\
&= \frac{(2m+1)x}{(4m+1)(4m+3)}\left(P_{2m+2}\left(\frac sx\right) - P_{2m}\left(\frac sx\right)\right)\Big|_t^x \\
&\quad +
\frac{2m x}{(4m-1)(4m+1)}\left(P_{2m}\left(\frac sx\right) - P_{2m-2}\left(\frac sx\right)\right)\Big|_t^x\\
&= -\frac{(2m+1)xP_{2m+2}(t/x)}{(4m+1)(4m+3)} - \frac{xP_{2m}(t/x)}{(4m-1)(4m+3)}+\frac{2mxP_{2m-2}(t/x)}{(4m-1)(4m+1)}.
\end{align*}
Hence,
\begin{equation}\label{Int05}
\int_0^x  \max\{s,t\} P_{2m}\left(\frac sx\right)\frac{ds}x = \frac{x P_{2m+2}(t/x)}{(4m+1)(4m+3)} - \frac{2x P_{2m}(t/x)}{(4m-1)(4m+3)} + \frac{x P_{2m-2}(t/x)}{(4m-1)(4m+1)}.
\end{equation}
One can easily check that equality \eqref{Int05} is valid also for $m=0$ if one takes $P_{-2}\equiv 0$.
Thus, the Gelfand-Levitan equation
(\ref{Gelfand-Levitan}) takes the form
\begin{align*}
-F(x,t) & = \sum_{n=0}^{\infty}\frac{g_{n}(x)}x P_{2n}\left(  \frac{t}{x}\right) +\sum_{m=0}^{\infty}g_{m}(x)\left\{\left(  \frac{1}{\alpha_{0}}-\frac{1}{\pi} +\frac{\omega x^2}{3\pi^2}+t^2\right)  \delta_{0m}+\frac{2\omega x^2}{15\pi^2}\delta_{m1}\right.\\
&\quad -\frac{\omega x}{\pi} \left( \frac{ P_{2m+2}(t/x)}{(4m+1)(4m+3)} - \frac{2P_{2m}(t/x)}{(4m-1)(4m+3)} + \frac{P_{2m-2}(t/x)}{(4m-1)(4m+1)}\right)\\
&\quad +
\left(  -1\right)  ^{m}\sum_{n=1}^{\infty}\left[\frac
{\cos\rho_{n}t}{\alpha_{n}}j_{2m}(\rho_{n}x)-\frac{2\cos nt}{\pi
n}j_{2m}(nx)\right.\\
& \quad + \left.\left.\frac{2\omega}{\pi^2 n} \left( x\cos nt j_{2m+1}(nx) + t\sin nt j_{2m}(nx) - \frac{2m}n \cos nt j_{2m}(nx)\right)\right]\right\}.
\end{align*}
Let us multiply this equality by $\frac 1x P_{2k}\left(  \frac{t}{x}\right)  $ and
integrate with respect to $t$ from $0$ to $x$. Note that now $\max\{x,t\}=x$, hence
\[
\int_0^x \max\{x,t\}P_{2k}\left(\frac tx\right)\,\frac{dt}{x} = \int_0^x P_{2k}\left(\frac tx\right) \,dt = x\delta_{0k},
\]
and one easily obtains that $\int_0^x F(x,t) P_{2k}\left(\frac tx\right)\, \frac{dt}x  = -\widetilde d_k(x)$, while the right-hand side reduces to $\frac{g_m(x)}{(4k+1)x}+\sum_{m=0}^{\infty}g_{m}(x)\widetilde C_{km}(x)$.
In order to verify the convergence of the series in (\ref{G-L-alt}) it is
sufficient to notice that for every $x\in\left(  0,\pi\right]  $ the series
$\sum_{m=0}^{\infty}g_{m}(x)\tilde C_{k,m}(x)$ is the inner product of two $\ell_{2}%
$-sequences. Indeed, note that the set of functions
\begin{equation}\label{basis pm}
\left\{p_m(t)\right\}_{m=0}^\infty :=\left\{\frac{\sqrt{4m+1} P_{2m}(t/x)}{\sqrt x}\right\}_{m=0}^\infty
\end{equation}
forms an orthonormal basis in $L_2(0,x)$. Hence the numbers $\frac{g_{m}(x)}{\sqrt x\sqrt{4m+1}}$ are the Fourier coefficients of the function
$G(x,t)$ with respect to the basis $\left\{  p_m(t)  \right\}  _{m=0}^{\infty}$, while the numbers
\[
\begin{split}
\sqrt x \sqrt{4m+1} \widetilde C_{k,m}(x) & = \sqrt x \sqrt{4m+1}\int_0^x \int_0^x F(s,t) P_{2m}\left(\frac sx\right) P_{2k}\left(\frac tx\right) \frac{ds}x \frac{dt}x \\
& = \int_0^x \biggl(\int_0^1 F(s,x\tau) P_{2k}(\tau)d\tau\biggr) p_m(s)\,ds
\end{split}
\]
are the Fourier coefficients of the function $F_{k}(x,t):=\int_{0}^{1}F(x,x\tau)P_{2k}(\tau)  d\tau$
with respect to the basis $\left\{  p_m(t)
\right\}  _{m=0}^{\infty}$.
\end{proof}

\begin{remark}
Note that the expressions $-\frac{2\omega}{\pi^{2}n}\bigl(x\sin nx\,\cos
nt+t\sin nt\,\cos nx\bigr)$, subtracted termwise from the series \eqref{F} in
order to obtain the representation \eqref{F alt}, are only the first order
approximations to the terms $\frac{\cos\rho_{n}x\,\cos\rho_{n}t}{\alpha_{n}%
}-\frac{\cos nx\,\cos nt}{\alpha_{n}^{0}}$. Hence, given a finite set of
spectral data, one can still benefit from adding some number of
\textquotedblleft asymptotic spectral data\textquotedblright\ as it was
discussed in Remark \ref{Remark add asymptotic}. However, much fewer terms are
needed to achieve a similar accuracy when comparing to the original
representation \eqref{F}.
\end{remark}

\begin{remark}
One can consider the integrated version of the representation \eqref{F alt}
and substitute it into the integrated Gelfand-Levitan equation \eqref{G-L-i}.
This results in a system similar to \eqref{G-L-alt}, with more complicated
expressions for the coefficients. Although one could expect a faster
convergence of the series involved, our numerical experiments showed that in
fact this does not lead to a significantly better accuracy.
\end{remark}

\subsection{Truncated main system of equations, its condition number,
existence and stability of solution}

\label{Subsect Stability} For the numerical solution of the system
\eqref{G-L-alt} it is natural to truncate the infinite system, i.e., to
consider $m,k\leq N$. We are going to prove that the solution of this
truncated system converges to the exact one when $N\rightarrow\infty$, and
that the process is stable, i.e., the condition number of the coefficient
matrix is uniformly bounded with respect to $N$, and, as a result, small
errors in coefficients and in the right hand side of the system lead to small
errors in the approximate solution. In order to apply the general theory, let
us multiply each equation in \eqref{G-L-alt} by $\sqrt{4k+1}\sqrt{x}$ and
rewrite the system as
\begin{equation}
\frac{g_{k}(x)}{\sqrt{4k+1}\sqrt{x}}+\sum_{m=0}^{\infty}\frac{g_{m}(x)}%
{\sqrt{4m+1}\sqrt{x}}\cdot x\sqrt{4k+1}\sqrt{4m+1}\widetilde{C}_{k,m}%
(x)=\sqrt{4k+1}\sqrt{x}\widetilde{d}_{k}(x),\quad k=0,1,\ldots\label{G-L-alt2}%
\end{equation}
Denote
\[
\xi_{k}=\frac{g_{k}(x)}{\sqrt{4k+1}\sqrt{x}},\qquad b_{k}=\sqrt{4k+1}\sqrt
{x}\widetilde{d}_{k}(x),\qquad a_{km}=x\sqrt{4k+1}\sqrt{4m+1}\widetilde
{C}_{k,m}(x).
\]
Then \eqref{G-L-alt2} can be written as
\begin{equation}
\xi_{k}+\sum_{k=0}^{\infty}a_{km}\xi_{m}=b_{k},\qquad k=0,1,\ldots
\label{G-L-alt L2form}%
\end{equation}

As was mentioned in the proof of Theorem \ref{Thm main system alt}, one has
\begin{align}
\xi_{k} &  =\int_{0}^{x}G(x,t)p_{k}(t)\,dt,\label{xik}\\
b_{k} &  =-\sqrt{4k+1}\sqrt{x}\int_{0}^{x}F(x,t)P_{2k}\left(  \frac{t}%
{x}\right)  \,\frac{dt}{x}=-\int_{0}^{x}F(x,t)p_{k}(t)\,dt,\label{bk}\\
a_{km} &  =\sqrt{x}\sqrt{4k+1}\int_{0}^{x}\int_{0}^{x}F(s,t)P_{2k}\left(
\frac{t}{x}\right)  \frac{dt}{x}\cdot p_{m}(s)\,ds=\int_{0}^{x}\int_{0}%
^{x}F(s,t)p_{k}(t)p_{m}(s)\,dt\,ds,\label{akm}%
\end{align}
where $\{p_{m}(t)\}_{m=0}^{\infty}$ is the orthonormal basis of $L_{2}(0,x)$
given by \eqref{basis pm}. Hence $\{\xi_{k}\}_{k=0}^{\infty}\in\ell_{2}$,
$\{b_{k}\}_{k=0}^{\infty}\in\ell_{2}$, $\{a_{km}\}_{k,m=0}^{\infty}\in\ell
_{2}\otimes\ell_{2}$, and the system \eqref{G-L-alt L2form} is of the type
studied in \cite[Chapter 14, \S 3]{KantorovichAkilov}. Moreover, it follows
from \eqref{xik}--\eqref{akm} that the truncated system
\begin{equation}
\xi_{k}+\sum_{k=0}^{N}a_{km}\xi_{m}=b_{k},\qquad k=0,1,\ldots
,N,\label{G-L-alt L2form truncated}%
\end{equation}
coincides with the system obtained by applying the Bubnov-Galerkin process to
equation \eqref{Gelfand-Levitan} with respect to the system \eqref{basis pm},
see \cite[\S 14]{Mihlin}. However, in our approach we do not need to solve the
complete system, only the first solution component $\xi_{0}$ is necessary to
recover the potential. Below, in Section \ref{Sect Numerical examples}\ we
illustrate the importance of this fact by a numerical example. Also we point
out that the special form of the function $F$ and the function system
\eqref{basis pm} allowed us to avoid the necessity to compute numerically the
scalar products arising in the Bubnov-Galerkin process and to reduce them to
the sums \eqref{c_km tilde} and \eqref{d_k til}.

Let $I_{N}$ be an $(N+1)\times(N+1)$ identity matrix, $L_{N}=(a_{km}%
)_{k,m=0}^{N}$ the coefficient matrix of the truncated system and
$R_{N}=(b_{k})_{k=0}^{N}$ the truncated right-hand side. Let
\begin{equation}
U_{N}=\left\{  \xi_{k}^{N}\right\}  _{k=0}^{N}=\left\{  \frac{g_{k}^{N}%
(x)}{\sqrt{4k+1}\sqrt{x}}\right\}  _{k=0}^{N} \label{sol truncated}%
\end{equation}
denote the solution of the truncated system \eqref{G-L-alt L2form truncated}.
Following \cite[\S 9]{Mihlin} consider a system (called non-exact system)
\[
(I_{N}+L_{N}+\Gamma_{N})v=R_{N}+\Delta_{N},
\]
where $\Gamma_{N}$ is an $(N+1)\times(N+1)$ matrix representing errors in the
coefficients $a_{km}$, and $\Delta_{N}$ is a column-vector representing errors
in the coefficients $b_{k}$. Let $V_{N}$ denote the solution of the non-exact
system. The solution of the Bubnov-Galerkin process is called stable if there
exist constants $c_{1}$, $c_{2}$ and $r$ independent of $N$ such that for
$\Vert\Gamma_{N}\Vert\leq r$ and arbitrary $\Delta_{N}$ the non-exact system
is solvable and the following inequality holds
\[
\Vert U_{N}-V_{N}\Vert\leq c_{1}\Vert\Gamma_{N}\Vert+c_{2}\Vert\Delta_{N}%
\Vert.
\]
Application of the general theory from \cite[Chapter 14, \S 3]%
{KantorovichAkilov} and \cite[Theorems 14.1 and 14.2]{Mihlin} leads to the
following result.

\begin{proposition}
Let $x>0$ be fixed. Then for a sufficiently large $N$ the truncated system
\eqref{G-L-alt L2form truncated} has a unique solution denoted in
\eqref{sol truncated} by $U_{N}$ and
\[
\sum_{k=0}^{N}\frac{|g_{k}^{N}(x)-g_{k}(x)|^{2}}{(4k+1)}+\sum_{k=N+1}^{\infty
}\frac{|g_{k}(x)|^{2}}{(4k+1)}\rightarrow0,\qquad N\rightarrow\infty,
\]
from which it also follows that
\[
g_{0}^{N}(x)\rightarrow g_{0}(x),\qquad N\rightarrow\infty.
\]
There exists such constant $r>0$ that for arbitrary error matrices $\Gamma_n$ satisfying $\|\Gamma_n\|<r$ for all $n\in\mathbb{N}$ the condition numbers of the approximate coefficient matrices $I_{N}+L_{N}+\Gamma_N$ are uniformly
bounded with respect to $N$ and the solution $U_{N}$ is stable.
\end{proposition}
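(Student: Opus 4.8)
The plan is to reduce the claimed statement to the general framework for infinite linear systems of the second kind developed in \cite[Chapter 14, \S 3]{KantorovichAkilov} and the Bubnov-Galerkin stability theory of \cite[Theorems 14.1 and 14.2]{Mihlin}. The essential structural fact, already established in the proof of Theorem \ref{Thm main system alt} and recorded in \eqref{xik}--\eqref{akm}, is that the system \eqref{G-L-alt L2form} has the form $(I+A)\xi=b$ where $A=(a_{km})$ is the matrix of a Hilbert-Schmidt (hence compact) operator on $\ell_2$, since $a_{km}=\int_0^x\int_0^x F(s,t)p_k(t)p_m(s)\,dt\,ds$ is the matrix of the integral operator with kernel $F$ in the orthonormal basis \eqref{basis pm}, and $F(\cdot,\cdot)\in L_2((0,x)\times(0,x))$ by the uniform convergence of \eqref{F alt}. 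Moreover $b\in\ell_2$ by \eqref{bk} and the exact solution $\xi=\{g_k(x)/(\sqrt{4k+1}\sqrt x)\}$ lies in $\ell_2$ by \eqref{xik} and \eqref{G Fourier series}.

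First I would check that $I+A$ is invertible on $\ell_2$. This is where one must invoke that the original Sturm-Liouville problem is genuinely recoverable: the Gelfand-Levitan equation \eqref{Gelfand-Levitan}, regarded as a Fredholm equation of the second kind in $L_2(0,x)$ with kernel $F(t,s)$, is uniquely solvable for each $x\in(0,\pi]$ (this is classical — it is exactly the solvability that underlies \cite[Theorem 1.3.2]{Yurko2007}; it follows from the positivity $\alpha_n>0$ and the fact that the associated operator $I+\mathcal F$ is positive). Since the Bubnov-Galerkin matrix $I_N+L_N$ is the $(N+1)\times(N+1)$ Galerkin truncation of the invertible operator $I+\mathcal F$ with respect to the orthonormal basis \eqref{basis pm}, the standard Galerkin convergence theorem for second-kind equations with compact perturbation (\cite[Chapter 14, \S 3]{KantorovichAkilov}) gives that for $N$ large enough $I_N+L_N$ is invertible, the truncated solutions $U_N$ converge to $\xi$ in $\ell_2$, and — crucially — the norms $\|(I_N+L_N)^{-1}\|$ are uniformly bounded in $N$. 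The $\ell_2$ convergence $\|U_N-\xi\|_{\ell_2}\to 0$ together with $\sum_{k>N}|\xi_k|^2\to0$ is precisely the displayed limit $\sum_{k=0}^N\frac{|g_k^N(x)-g_k(x)|^2}{4k+1}+\sum_{k>N}\frac{|g_k(x)|^2}{4k+1}\to0$, and convergence of the zeroth component $g_0^N(x)\to g_0(x)$ is immediate since $|\xi_0^N-\xi_0|\le\|U_N-\xi\|_{\ell_2}$.

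For the stability claim I would argue as follows. Uniform boundedness $\|(I_N+L_N)^{-1}\|\le C$ for $N\ge N_0$ means the truncated coefficient matrices have condition numbers bounded by $C\cdot\sup_N\|I_N+L_N\|$, and $\|I_N+L_N\|\le 1+\|A\|<\infty$; so the exact truncated systems are already well-conditioned. Now take $r=\tfrac{1}{2C}$. If $\|\Gamma_N\|<r$ then $I_N+L_N+\Gamma_N=(I_N+L_N)(I+(I_N+L_N)^{-1}\Gamma_N)$ is invertible by Neumann series, with $\|(I_N+L_N+\Gamma_N)^{-1}\|\le 2C$, so the perturbed matrices remain uniformly well-conditioned and the non-exact system is solvable. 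Subtracting the exact and non-exact equations, $U_N-V_N=(I_N+L_N+\Gamma_N)^{-1}\big(\Delta_N-\Gamma_N U_N\big)$, and using $\|U_N\|\le\|(I_N+L_N)^{-1}\|\,\|R_N\|\le C\sup_N\|b\|=:C'$ we get $\|U_N-V_N\|\le 2C C'\|\Gamma_N\|+2C\|\Delta_N\|$, which is the required estimate with $c_1=2CC'$, $c_2=2C$. This is exactly the conclusion of \cite[Theorems 14.1 and 14.2]{Mihlin}, so I would simply cite those after verifying their hypotheses (compactness of $A$, invertibility of $I+A$) hold in our situation.

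**The main obstacle** is not any of the estimates above — those are routine once the framework applies — but rather the invertibility of $I+A$ on $\ell_2$, equivalently the unique solvability of the Gelfand-Levitan equation \eqref{Gelfand-Levitan} in $L_2(0,x)$ for every fixed $x\in(0,\pi]$. One has to be careful that the representation \eqref{F alt} used to build the $a_{km}$ defines the \emph{same} operator as the classical kernel \eqref{F} (this is the content of \cite{KKT2019AMC}), and that the classical solvability theory \cite[Theorem 1.3.2]{Yurko2007}, which is usually stated on the full interval $(0,\pi)$, restricts correctly to each subinterval $(0,x)$ — which it does, because the restriction of the Gelfand-Levitan operator to $(0,x)$ is again the Gelfand-Levitan operator associated with the same spectral data, and the positivity argument is insensitive to the length of the interval. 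Once this is in place, everything else is bookkeeping with Hilbert-Schmidt norms and the Neumann series.
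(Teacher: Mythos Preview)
Your proposal is correct and follows exactly the route the paper takes: the paper's proof consists solely of the sentence ``Application of the general theory from \cite[Chapter 14, \S 3]{KantorovichAkilov} and \cite[Theorems 14.1 and 14.2]{Mihlin} leads to the following result,'' and you have simply unpacked what that application entails, correctly identifying the Hilbert--Schmidt structure via \eqref{akm}, the invertibility of $I+A$ from the classical solvability of the Gelfand--Levitan equation, and the resulting Galerkin convergence and Neumann-series stability. Your write-up is in fact considerably more detailed than the paper's own one-line justification.
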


\subsection{Flipping the interval}

\label{Subsect reverse interval} A motivation for considering  the integrated
Gelfand-Levitan equation \eqref{Gelfand-Levitan integrated} and the
alternative representation \eqref{F alt} is to solve the problem of the jump
discontinuity of the series \eqref{F} at $x=t=\pi$ (for $\omega\neq0$) and the
resulting slow convergence of the series near this critical point. An
additional improvement can be achieved by recovering the potential only on the
first half of the interval, while on the second half recovering the flipped
potential $q(\pi-x)$.

Another motivation to consider the flipped problem consists in the following
observation. Suppose the potential $q$ is piecewise smooth, that is, there are
several \textquotedblleft special\textquotedblright\ points inside $[0,\pi]$,
where some derivative of $q$ is discontinuous, while on the rest of the
segment the potential is infinitely differentiable. It is known \cite{KNT}
that the series \eqref{G Fourier series} converges fast (super polynomially)
up to the first \textquotedblleft special point\textquotedblright, and
afterwards the convergence slows down to a polynomial one. For the inverse
problem this phenomenon results in a higher accuracy of the recovered
potential on the segment from $0$ to the first \textquotedblleft special
point\textquotedblright. By performing the flipping of the problem one can
assure a higher accuracy of the recovered potential also on the segment from
the last \textquotedblleft special point\textquotedblright\ to $\pi$. The
greatest advantage is achieved in the case when there is only one
\textquotedblleft special point\textquotedblright.

In this subsection we show how the spectral data for the flipped potential can
be obtained.

Consider the following \textquotedblleft flipped\textquotedblright\ spectral
problem
\begin{gather}
-y^{\prime\prime}+q(\pi-x)y=\rho^{2}y,\qquad x\in(0,\pi),\label{Eq reversed}\\
y^{\prime}(0)-Hy(0)=0,\qquad y^{\prime}(\pi)+hy(\pi)=0. \label{BC reversed}%
\end{gather}

Obviously it has the same eigenvalues as the problem \eqref{SL equation},
\eqref{bc1}, and the functions $\varphi^{r}(\rho_{n},x):=\varphi(\rho_{n}%
,\pi-x)$ are the eigenfunctions of \eqref{Eq reversed}, \eqref{BC reversed}.
But the functions $\varphi^{r}(\rho_{n},x)$ do not necessarily satisfy
\eqref{init cond}, so the norming constants (let us denote them by $\alpha
_{n}^{r}$) change. Actually, $\varphi^{r}(\rho_{n},0)=\varphi(\rho_{n},\pi)$,
so it can be seen from \eqref{init cond} and \eqref{alpha n} that
\begin{equation}
\alpha_{n}^{r}=\frac{\alpha_{n}}{\varphi^{2}(\rho_{n},\pi)}. \label{alpha n r}%
\end{equation}

The formula \cite[formula (1.1.35)]{Yurko2007} states that
\begin{equation}
\alpha_{n}=-\frac{\varphi\left(  \rho_{n},\pi\right)  }{2\rho_{n}}\left.
\frac{d}{d\rho}\left(  \varphi^{\prime}\left(  \rho,\pi\right)  +H\varphi
\left(  \rho,\pi\right)  \right)  \right\vert _{\rho=\rho_{n}}.
\label{alpha_n}%
\end{equation}
Notice that with the aid of Theorem \ref{Theorem NSBF} equality
\eqref{alpha_n} can be written in the form
\begin{equation}
\label{alpha n partial}\alpha_{n} = \frac{\varphi(\rho_{n},\pi)}{2\rho_{n}}
\biggl( ( 1+\pi\omega) \sin\rho_{n}\pi+\pi\rho_{n}\cos\rho_{n}\pi+\sum
_{k=0}^{\infty}(-1)^{k}h_{k}\left(  \pi j_{2k+1}(\rho_{n}\pi) -\frac{2k}%
{\rho_{n}}j_{2k}(\rho_{n}\pi)\right)  \biggr)
\end{equation}
for $n=1,2,\ldots,$ and
\begin{equation}
\label{alpha 0 partial}\alpha_{0} = \varphi(0,\pi) \left(  \pi+\omega\frac
{\pi^{2}}{3}+h_{1}\frac{\pi^{2}}{15}\right)  ,
\end{equation}
here $h_{k}$ are the constants introduced in Subsection
\ref{subsect recovery hn}.

Hence
\begin{equation}
\label{alpha n reversed}\alpha_{n}^{r} = \frac{\left(  ( 1+\pi\omega) \sin
\rho_{n}\pi+\pi\rho_{n}\cos\rho_{n}\pi+\sum_{k=0}^{\infty}(-1)^{k}h_{k}\left(
\pi j_{2k+1}(\rho_{n}\pi) -\frac{2k}{\rho_{n}}j_{2k}(\rho_{n}\pi)\right)
\right)  ^{2}}{4\alpha_{n} \rho_{n}^{2}}%
\end{equation}
for $n=1,2,\ldots$ and
\begin{equation}
\label{alpha 0 reversed}\alpha_{0}^{r} = \frac{\left(  \pi+\omega\frac{\pi
^{2}}{3}+h_{1}\frac{\pi^{2}}{15}\right)  ^{2}}{\alpha_{0}}.
\end{equation}

\subsection{Algorithm of solution of Problem \ref{Problem 1}%
\label{Subsect Algorithm 1}}

Given a finite set of spectral data $\left\{  \rho_{n},\,\alpha_{n}\right\}
_{n=0}^{N_{1}}$, the following direct method for recovering the potential $q$
and the numbers $h$ and $H$ is proposed.

\begin{enumerate}
\item\label{Step0} If $\rho_{0}\neq0$, perform the shift of the eigenvalues
\[
\tilde{\rho}_{n}=\sqrt{\rho_{n}^{2}-\rho_{0}^{2}},\qquad n\geq0,
\]
so that $0$ becomes the first eigenvalue of the spectral problem. Let us
denote the shifted eigenvalues by the same expression $\rho_{n}$.

\item Find the coefficients $\omega^{(2j-1)}$, $1\le j\le K$ and
$\omega^{(2j,+)}$, $1\le j\le K-1$ as described in Subsection
\ref{Subsect recovery omega}. The parameter $\omega$ can be recovered as
$\omega= \pi\omega^{(1)}$.

\item \label{StepAsymptotic} Complement the set of ``exact'' spectral data
$\left\{  \rho_{n},\,\alpha_{n}\right\}  _{n=0}^{N_{1}}$ with a set of
``asymptotic'' spectral data $\left\{  \rho_{n}=n+\sum_{j=1}^{K}\frac
{\omega^{(2j-1)}}{ n^{2j-1}},\,\alpha_n=\frac{\pi}{2}+\sum_{j=1}^{K-1}\frac
{\omega^{(2j,+)}}{ n^{2j}}\right\}  _{n=N_{1}+1}^{M}$.

\item \label{Step2} For a set of points $\left\{  x_{l}\right\}  $ from
$(0,a]$, where $a\in(\pi/2,\pi]$, compute the approximate values of the
coefficients $\widetilde C_{km}(x)$ and $\widetilde d_{k}(x)$ for
$k,m=0,\ldots,N$ with the aid of the formulas
\eqref{c_km tilde}--\eqref{d_k til} and solve the system
\eqref{G-L-alt L2form truncated} obtaining thus $g_{0}(x)$.

\item \label{Step4} Compute $q$ on the segment $[0,a]$ from (\ref{q from g0}).
Take into account that $\varphi(0,x)$ is an eigenfunction associated with the
first eigenvalue $\lambda_{0}$ and hence does not have zeros on $[0,\pi]$
(see, e. g., \cite[Theorem 8.4.5]{Atkinson}). This justifies the division over
$\varphi(0,x)$.

\item Compute $h$ from $h=g_{0}^{\prime}(0)$.

\item\label{Step hn}
 Solving the (overdetermined) system of linear algebraic equations
\eqref{syst1 overdet} (with all $M+1$ exact and ``asymptotic'' eigenvalues),
compute the constants $h_{0},\ldots, h_{N_{1}^{o}}$.

\item Perform the flipping of the interval as explained in Subsection
\ref{Subsect reverse interval} and repeating steps \ref{Step2}--\ref{Step4}
compute $q_{rev}$ on the segment $[\pi/2,\pi]$.

\item \label{Step6} Combine $q$ and $q_{rev}$ to recover the potential on the
whole segment $[0,\pi]$.

\item Compute $H$ using (\ref{omega}). For this compute the mean of the
potential $\int_{0}^{\pi}q(t)\,dt$, and thus,%
\[
H=\omega-h-\frac{1}{2}\int_{0}^{\pi}q(t)\,dt.
\]

\item Recall that one has to add the original eigenvalue $\rho_{0}^{2}$ back
to the recovered potential to return to the original potential $q$.
\end{enumerate}

Below, in Section \ref{Sect Numerical examples} we illustrate the performance
of this algorithm with several numerical examples.

\begin{remark}
The steps \ref{Step hn}--\ref{Step6} are optional. As we show in Section \ref{Sect Numerical examples}, for many applications a potential $q$ recovered performing only the steps \ref{Step0}--\ref{Step4} can be sufficient.
\end{remark}

\section{Reduction of Problem \ref{Problem 2} to Problem \ref{Problem 1}}

Now we suppose that the eigenvales $\left\{  \lambda_{n}\right\}
_{n=0}^{\infty}$ and $\left\{  \nu_{n}\right\}  _{n=0}^{\infty}$ of the
Sturm-Liouville problems (\ref{SL equation}), (\ref{bc1}) and
(\ref{SL equation}), (\ref{bc2}), respectively, are given. We remind that
$\lambda_{0}=0$. Then on the first step we can repeat the procedure described
in Subsections \ref{Subsect recovery omega} and \ref{subsect recovery hn} and
obtain the parameter $\omega$ together with the constants $h_{n}$.

\subsection{Recovery of the parameter $\omega_{1}$ and coefficients of the
asymptotic expansion of eigenvalues $\mu_{n}$}

\label{Subsect recovery omega1} The parameter $\omega_{1}$ can be recovered
using the asymptotics of the eigenvalues. Indeed, it immediately follows from
\eqref{asympt 2} that
\[
\omega_{1} = \pi\lim_{n\to\infty} n\left(  \mu_{n}-n-\frac12\right)  .
\]

Suppose that only a finite set of eigenvalues $\mu_{0},\ldots,\mu_{N_{2}}$ is
given. Then, similarly to Subsection \ref{Subsect recovery omega}, one can
recover the parameter $\omega_{1}$ using the asymptotics of the eigenvalues
$\mu_{k}$. One has for $q\in W_{2}^{N}$, $N\geq1$,
\begin{equation}
\mu_{n}=n+\frac{1}{2}+\sum_{j=1}^{N+1}\frac{\omega_{1}^{(j)}}{n^{j}}%
+\frac{\eta_{n}}{n^{N+1}},\label{mun refined}%
\end{equation}
where $\omega_{1}^{(1)}=\frac{\omega_{1}}{\pi}$ and $\{\eta_{n}\}_{n=0}%
^{\infty}\in\ell_{2}$. Contrary to \eqref{rn refined}, the coefficients
$\omega_{1}^{(2k)}$ need not be zeros. Hence one can recover the parameter
$\omega_{1}$ by finding coefficients $\omega_{1}^{(j)}$, $1\leq j\leq K$ in
the best fit problem
\begin{equation}
\sum_{j=1}^{K}\frac{\omega_{1}^{(j)}}{n^{j}}=\mu_{n}-n-\frac{1}{2},\qquad
N_{s}\leq n\leq N_{2}.\label{best fit for omega1}%
\end{equation}
Similarly, the parameter $K<N_{2}-N_{s}$ is taken to be such that the least
squares error of the fit for $K$ is significantly better than the least
squares error for $K-1$. In practice, depending on the potential, optimal
value results to be up to 5.

From (\ref{omega1}) we compute
\begin{equation}
H=\omega-\omega_{1}. \label{H}%
\end{equation}

\subsection{Recovery of the norming constants $\alpha_{n}$}

Our goal is to calculate the norming constants with the aid of the formula
\eqref{alpha_n} and thus to reduce the problem to Problem \ref{Problem 1}.

Notice that with the aid of Theorem \ref{Theorem NSBF} equality (\ref{alpha_n}%
) can be written in the form
\begin{equation}
\label{alpha_n via series}%
\begin{split}
\alpha_{n}  &  =\frac{1}{2\rho_{n}}\left(  \cos\rho_{n}\pi+\sum_{k=0}^{\infty
}(-1)^{k}g_{k}(\pi)j_{2k}(\rho_{n}\pi)\right) \\
&  \quad\times\left(  \left(  1+\pi\omega\right)  \sin\rho_{n}\pi+\pi\rho
_{n}\cos\rho_{n}\pi+\sum_{k=0}^{\infty}(-1)^{k}h_{k} \left(  \pi j_{2k+1}%
(\rho_{n}\pi)- \frac{2k} {\rho_{n}}j_{2k}(\rho_{n}\pi)\right)  \right)
\end{split}
\end{equation}
for $n=1,2,\ldots$ and
\begin{equation}
\alpha_{0}=\left(  1+g_{0}(\pi)\right)  \left(  \pi+\omega\frac{\pi^{2}}%
{3}+h_{1}\frac{\pi^{2}}{15}\right)  . \label{alpha_0}%
\end{equation}

Hence to calculate the norming constants we only need to calculate the
coefficients $g_{k}(\pi)$.

Since $\nu_{n}=\mu_{n}^{2}$ are eigenvalues of the problem (\ref{SL equation}%
), (\ref{bc2}) we have that
\[
\varphi\left(  \mu_{k},\pi\right)  =\cos\mu_{k}\pi+\sum_{n=0}^{\infty}%
(-1)^{n}g_{n}(\pi)j_{2n}(\mu_{k}\pi)=0
\]
for all $k=0,1,\ldots$. Thus, for the coefficients $g_{n}(\pi)$ we have the
system of equations
\begin{equation}
\sum_{n=0}^{\infty}(-1)^{n}g_{n}(\pi)j_{2n}(\mu_{k}\pi)=-\cos\mu_{k}\pi,\quad
k=0,1,\ldots. \label{second system}%
\end{equation}
Solving it, we find the coefficients $g_{n}(\pi)$ and hence can calculate the
constants $\alpha_{n}$ from (\ref{alpha_n via series}) and (\ref{alpha_0}).

Similarly to Subsection \ref{subsect recovery hn}, having finite number of
eigenvalues $\{\mu_{n}\}_{n=0}^{N_{2}}$ given, it is not necessary to look for
the same number of the coefficients $g_{n}(\pi)$. One can consider an
overdetermined system
\begin{equation}
\label{syst2 overdet}\sum_{n=0}^{N_{2}^{o}}(-1)^{n}g_{n}(\pi)j_{2n}(\mu_{k}%
\pi)=-\cos\mu_{k}\pi,\qquad k=0,1,\ldots,N_{2},
\end{equation}
taking as $N_{2}^{o}$ a value for which the coefficient matrix has relatively
small condition number.

Thus, we have the constants $\left\{  \lambda_{n}\right\}  _{n=0}^{\infty}$,
$\left\{  \alpha_{n}\right\}  _{n=0}^{\infty}$ together with the parameter
$\omega$ and the constant $H$, and can apply the algorithm from Subsection
\ref{Subsect Algorithm 1} for recovering $q(x)$ and $h$.

\begin{remark}
\label{Rmk alternate omega1} We obtain from \eqref{Gxx} and \eqref{Gxx series}
that
\begin{equation}
\omega_{1}=G(\pi,\pi)=\sum_{n=0}^{\infty}\frac{g_{n}(\pi)}{\pi},
\label{omega1 via series}%
\end{equation}
an alternative way to recover the parameter $\omega_{1}$.
\end{remark}

\subsection{Algorithm of solution of Problem \ref{Problem 2}}

\label{Subsect Algorithm 2}

Given two finite sets of eigenvalues $\left\{  \rho_{n}^{2}\right\}
_{n=0}^{N_{1}}$ and $\left\{  \mu_{n}^{2}\right\}  _{n=0}^{N_{2}}$ of the
Sturm-Liouville problems (\ref{SL equation}), (\ref{bc1}) and
(\ref{SL equation}), (\ref{bc2}), respectively. $N_{2}$ can differ from
$N_{1}$. The following algorithm allows one to approximate $q(x)$ and the
constants $h$, $H$.

\begin{enumerate}
\item Find the coefficients $\omega^{(2j-1)}$, $1\le j\le K_{1}$ and
$\omega_{1}^{(j)}$, $1\le j\le K_{2}$ as described in Subsections
\ref{Subsect recovery omega} and \ref{Subsect recovery omega1}. The parameters
$\omega$ and $\omega_{1}$ can be recovered as $\omega= \pi\omega^{(1)}$,
$\omega_{1} = \pi\omega_{1}^{(1)}$.

\item Compute $H$ from (\ref{H}).

\item \label{StepAsymptotic2} Complement the set of ``exact'' spectral data
$\left\{  \rho_{n}\right\}  _{n=0}^{N_{1}}$, $\left\{  \mu_{n}\right\}
_{n=0}^{N_{2}}$ with a set of ``asymptotic'' spectral data $\left\{  \rho
_{n}=n+\sum_{j=1}^{K_{1}}\frac{\omega^{(2j-1)}}{ n^{2j-1}}\right\}
_{n=N_{1}+1}^{M}$, $\left\{  \mu_{n}=n+\frac12+\sum_{j=1}^{K_{2}}\frac
{\omega_{1}^{(j)}}{ n^{j}}\right\}  _{n=N_{2}+1}^{M}$.

\item Solving the (overdetermined) systems \eqref{syst1 overdet} and
\eqref{syst2 overdet} (with both given and ``asymptotic'' eigenvalues) compute
the constants $h_{n}$ for $n=0,\ldots, N_{1}^{o}$ and $g_{n}(\pi)$ for
$n=0,\ldots,N_{2}^{o}$.

\item Compute the norming constants $\alpha_{0}$ by (\ref{alpha_0}) and
\begin{align*}
\alpha_{n}  &  =\frac{1}{2\rho_{n}}\biggl( \cos\rho_{n}\pi+\sum_{k=0}%
^{N_{2}^{o} }(-1)^{k}g_{k}(\pi)j_{2k}(\rho_{n}\pi)\biggr)\\
&  \quad\times\biggl( \left(  1+\pi\omega\right)  \sin\rho_{n}\pi+\pi\rho
_{n}\cos\rho_{n}\pi+\sum_{k=0}^{N_{1}^{o}}(-1)^{k}h_{k}\left(  \pi
j_{2k+1}(\rho_{n}\pi)-\frac{2k}{\rho_{n}}j_{2k}(\rho_{n}\pi)\right)  \biggr)
\end{align*}
for $n=1,\ldots,M$.

\item Apply steps \ref{Step2}--\ref{Step6} from Subsection
\ref{Subsect Algorithm 1} for computing $q(x)$ and $h$.
\end{enumerate}

\subsection{Modification for other spectral data}

Though in the present work only two classical inverse Sturm-Liouville problems
are considered, the method is applicable to some other inverse Sturm-Liouville
problems. For example, it is well known (see, e.g., \cite[p. 103]{Chadan et al
1997}) that the problem of recovery of a symmetric potential from one
spectrum reduces to a two-spectra problem.

The problem of recovery of the potential from one spectrum and endpoint data
can be solved by our methos as well. Suppose that besides the spectrum of a
problem (\ref{SL equation}), (\ref{bc2}) a sequence of endpoint data is given,
say,  $\left\{  \varphi^{\prime}(\mu_{n},\pi)\right\}  _{n=0}^{\infty}$. Then
the set of constants $\left\{  g_{n}(\pi)\right\}  $ can be recovered from
(\ref{second system}), and consequently the parameter $\omega_{1}$ from
(\ref{omega1 via series}). The set of constants $\left\{  \gamma_{n}%
(\pi)\right\}  $ can be recovered from the system
\[
\sum_{n=0}^{\infty}(-1)^{n}\gamma_{n}(\pi)j_{2n}(\mu_{k}\pi)=\varphi^{\prime
}(\mu_{k},\pi)+\mu_{k}\sin\mu_{k}\pi-\omega_{1}\cos\mu_{k}\pi,\quad
k=0,1,\ldots,
\]
obtained from (\ref{phiprime}). Due to Theorem \ref{Theorem NSBF}, having the
sequences $\left\{  g_{n}(\pi)\right\}  $ and $\left\{  \gamma_{n}%
(\pi)\right\}  $ is equivalent to have $\varphi(\rho,\pi)$ and $\varphi
^{\prime}(\rho,\pi)$ for all $\rho$. This means that a second spectrum
(corresponding to an arbitrary $H$) can be computed , and hence the problem
reduces to the two-spectra problem.

\section{Numerical examples\label{Sect Numerical examples}}

Numerical implementation of the algorithms proposed in Subsections
\ref{Subsect Algorithm 1} and \ref{Subsect Algorithm 2} is quite simple and
does not require more than build-in functions of, e.g., Matlab. Only several
comments should be made.

First, we obtained the parameter $\omega$ from the asymptotics (see Subsection \ref{Subsect recovery omega}) and similarly to Subsection \ref{Subsect Stability}, we considered
$\frac{g_{n}(\pi)}{\sqrt{4n+1}}$ and $\frac{h_{n}}{\sqrt{4n+1}}$ as the new
unknowns for the systems \eqref{syst1 overdet} and \eqref{syst2 overdet},
rewriting them as
\begin{gather*}
\sum_{n=0}^{N_{1}^{o}}(-1)^{n}\frac{h_{n}}{\sqrt{4n+1}%
}\sqrt{4n+1}j_{2n}(\rho_{k}\pi)=-\omega\cos\rho_{k}\pi+\rho_{k}\sin\rho_{k}\pi,\qquad k=1,\ldots,M,\\
\sum_{n=0}^{N_{2}^{o}}(-1)^{n}\frac{g_{n}(\pi)}{\sqrt{4n+1}}\sqrt{4n+1}%
j_{2n}(\mu_{k}\pi)=-\cos\mu_{k}\pi,\qquad k=0,1,\ldots,M.
\end{gather*}
After such modification all (except the largest one which is close to $1$ and
possibly several smallest ones) singular values of the coefficient matrices
become very close to $\frac{\sqrt{2}}{2}$. Reducing the numbers $N_{1}^{o}$
and $N_{2}^{o}$ the smallest eigenvalues disappear, and all the eigenvalues
become larger than $1/2$. So one can take as a criterion for choosing the
parameters $N_{1}^{o}$ and $N_{2}^{o}$, e.g., that the corresponding condition
numbers be less than 10.

Second, in many formulas we need the values of the spherical Bessel functions
$j_{k}(t)$ for a list of indices $k=0,\ldots,N$ for the same argument $t$. A
considerable speedup is achieved by applying the backwards recursion formula
\[
j_{n-1}(t)=\frac{2n+1}{t}j_{n}(t)-j_{n+1}(t),
\]
see, e.g., \cite{Barnett}, \cite{GillmanFiebig} for details. So only
$j_{N-1}(t)$ and $j_{N}(t)$ have to be computed using, e.g., \texttt{besselj}
function from Matlab. A special care should be taken only for small values of
$t$ and large values of $N$, since the absolute values of $j_{N-1}(t)$ and
$j_{N}(t)$ can be less than $10^{-307}$, that is zero in the double machine
precision arithmetics.

Third, numerical differentiation is used to recover the potential $q$ and the
parameter $h$ from the coefficient $g_{0}$. To obtain good results, the set of
points $\{x_{l}\}$ chosen on the step \ref{Step2}, should not be large,
100--200 points are sufficient. We refer the reader to \cite[p.111--114]%
{VS1986}, where an explanation is given for a closely related problem of
solution of the Volterra integral equation of the first kind. We have
interpolated the obtained values $g_{0}(x_{l})$ by a 6th order spline and used
Matlab's function \texttt{fnder} to perform the differentiation. It should be
noted that adding a large set of \textquotedblleft
asymptotic\textquotedblright\ spectral data (on steps \ref{StepAsymptotic} of the proposed algorithms) greatly
improves the stability of numerical differentiation in the sense that the
obtained potential $q$ changes very little whenever one takes 50, 100 or 500
points $\{x_{l}\}$.

Another possibility to perform numerical differentiation is to approximate the
coefficient $g_{0}$ by a polynomial on the whole segment $[0,\pi]$ and
differentiate this polynomial. By taking $\{x_{l}\}$ to be the Chebyshev nodes
(adapted to the segment $[0,\pi]$), coefficients of the Fourier-Chebyshev
series of $g_{0}$ can be obtained quite easily. Higher-order oscillatory
component in $g_{0}$ (appearing due to truncation of the series in
\eqref{c_km tilde}--\eqref{d_k til}), leading to large error during numerical
differentiation, can be filtered out by truncating the Fourier-Chebyshev
series once the coefficients become relatively small. The remaining partial
sum provides an approximating polynomial. We refer the reader to \cite{AT2017}
for further details.

\subsection{Numerical solution of Problem \ref{Problem 1}}

In this subsection we illustrate the algorithm from Subsection
\ref{Subsect Algorithm 1} on two examples: of a smooth potential and of a
potential possessing a discontinuous derivative. We have taken the same
potentials as those considered in \cite{Kammanee Bockman 2009},
\cite{Kr2019JIIP}, \cite{KrBook2020}, however having both parameters $h$ and
$H$ different from zero to illustrate that proposed method works in the most
general setting.

\begin{example}
\label{Ex1} Consider the spectral problem \eqref{SL equation}, \eqref{bc1}
with $q(x)=2 + \sin2x$, $h=1$ and $H=1/2$. We have computed the spectral data
$\{\rho_{n},\, \alpha_{n}\}_{n=0}^{N_{1}}$ for $N_{1}=200$ using the method
proposed in \cite{KNT}.

The parameter $\omega$ was recovered as explained in Subsection
\ref{Subsect recovery omega} with $K=1,2,3$. The following accuracy was
obtained: for $K=1$: $5.2\cdot10^{-5}$, for $K=2$: $5.7\cdot10^{-9}$ and for
$K=3$: $7.7\cdot10^{-12}$. There was observed no further improvement when
taking $K>3$.

As was mentioned in Remark \ref{Rmk Alternative omega}, the parameter $\omega$ also can be recovered from $\omega = -h_0$, without need to use eigenvalues asymptotics. In Table \ref{Ex1Table1} we present absolute errors for both methods for different number of eigenvalues used. As one can see, the accuracy of the both methods is comparable.

\begin{table}[h]
\centering
\begin{tabular}{c|c|c|c}
\hline
$N_1$  & $\Delta\omega$, asymptotic & $\Delta\omega$, $\operatorname{cond}(A)<2000$ & $\Delta\omega$, $\operatorname{cond}(A)<200$ \\
\hline
201 & $7.7\cdot 10^{-12}$ & $2.2\cdot 10^{-10}$ & $9.9\cdot 10^{-12}$ \\
100 & $3.4\cdot 10^{-11}$ & $5.1\cdot 10^{-11}$ & $2.1\cdot 10^{-11}$ \\
50 & $4.0\cdot 10^{-9}$ & $6.2\cdot 10^{-12}$ & $1.0\cdot 10^{-13}$ \\
20 & $1.0\cdot 10^{-6}$ & $2.4\cdot 10^{-10}$ & $1.4\cdot 10^{-7}$ \\
10 & $6.2\cdot 10^{-5}$ & $1.1\cdot 10^{-5}$ & $5.2\cdot 10^{-5}$ \\
5  & $7.4\cdot 10^{-3}$ & $4.6\cdot 10^{-2}$ & $4.6\cdot 10^{-2}$ \\
\hline
\end{tabular}
\caption{Absolute error of the recovered parameter $\omega$ from Example \ref{Ex1}. Second column: eigenvalue asymptotics (Subsection \ref{Subsect recovery omega}) used, third and forth columns: by solving an overdetermined system \eqref{syst1 overdet}, with the number of unknowns obtained by checking condition number of the coefficient matrix.}
\label{Ex1Table1}
\end{table}

\begin{figure}[h]
\centering
\includegraphics[bb=0 0 216 173
height=2.4in,
width=3in
]{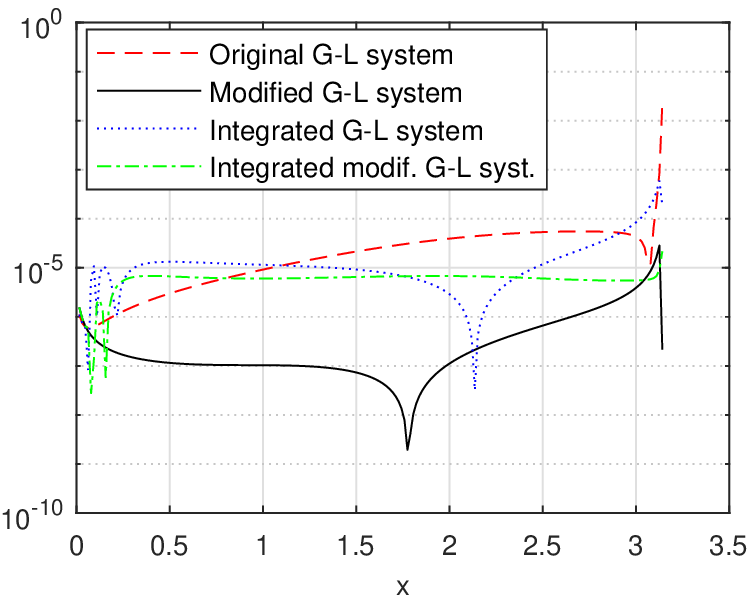} \quad\includegraphics[bb=0 0 216 173
height=2.4in,
width=3in
]{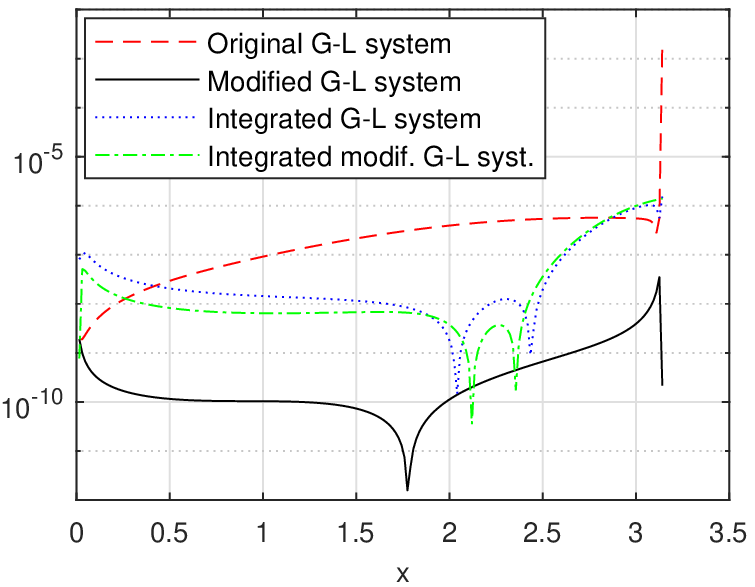}\caption{Absolute error of the recovered coefficient $g_{0}(x)$
for Example \ref{Ex1}. 4 different main systems of equations were used:
\eqref{G-L-alt} having $\omega=0$ (original G-L system), \eqref{G-L-i}
(integrated G-L system), \eqref{G-L-alt} (modified G-L system) and the one
which can be obtained if one integrates first the expression \eqref{F alt}
(integrated modified G-L system). 8 equations left in the truncated system in
all cases. On the left: 201 exact eigenvalues and norming constants were used
to compute the coefficients of the systems. On the right: additionally 1800
asymptotic eigenvalues and norming constants were added.}%
\label{Ex1Fig1}%
\end{figure}

\begin{figure}[h]
\centering
\includegraphics[bb=0 0 216 173
height=2.4in,
width=3in
]{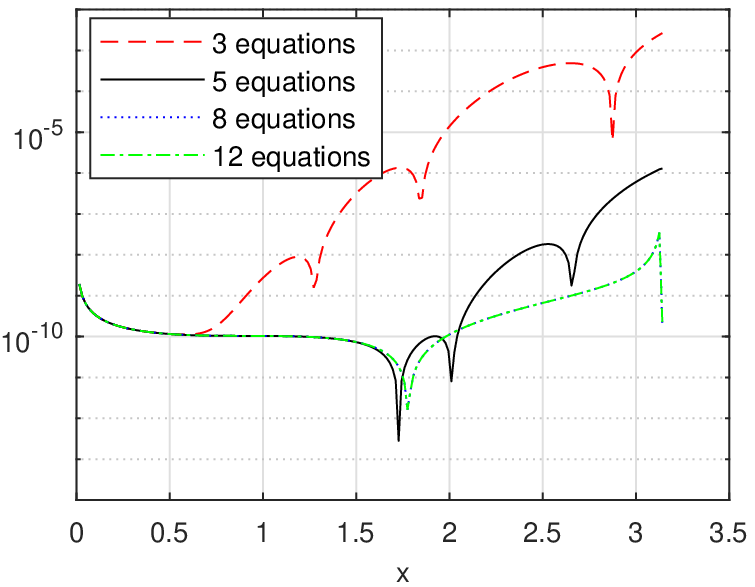} \quad\includegraphics[bb=0 0 216 173
height=2.4in,
width=3in
]{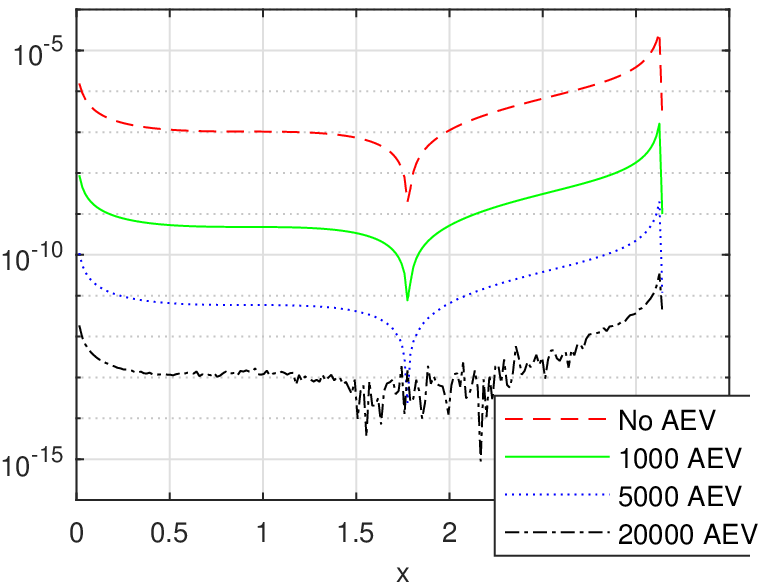}\caption{Absolute error of the recovered coefficient $g_{0}(x)$ for
Example \ref{Ex1} obtained from the truncated system \eqref{G-L-alt}. On the
left: different number of equations in the truncated system, 201 exact and
1800 asymptotic eigenvalues and norming constants were used. One the right: 8
equations in the truncated system, 201 exact but different number of
asymptotic eigenvalues (AEV) and norming constants were used.}%
\label{Ex1Fig2}%
\end{figure}

\begin{figure}[h]
\centering
\includegraphics[bb=0 0 216 173
height=2.4in,
width=3in
]{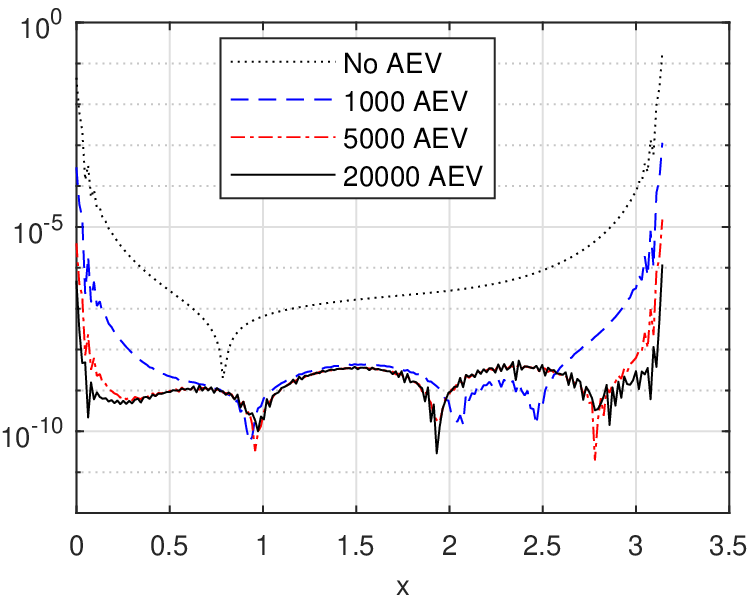} \quad\includegraphics[bb=0 0 216 173
height=2.4in,
width=3in
]{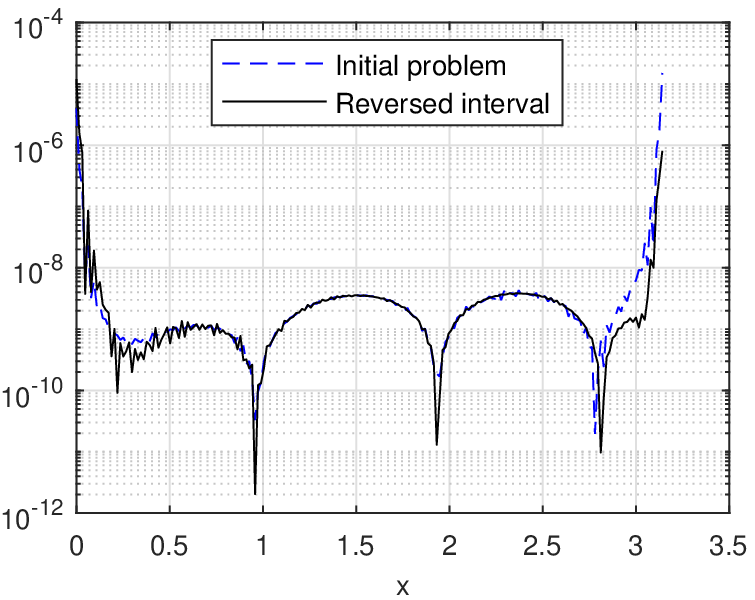}\caption{Absolute error of the recovered potential for Example
\ref{Ex1} obtained from the truncated system \eqref{G-L-alt} with 8 equations.
On the left: different number of asymptotic eigendata were added. On the
right: comparison with the error of the potential recovered from the flipped
problem (see Subsection \ref{Subsect reverse interval}), for both cases 5000
asymptotic eigendata were added.
}%
\label{Ex1Fig3}%
\end{figure}

On Figure \ref{Ex1Fig1} we compare the accuracy of the coefficient $g_{0}$
recovered using different truncated systems of equations: the original system
from \cite{Kr2019JIIP} (coinciding with \eqref{G-L-alt} when one takes
$\omega=0$), the system \eqref{G-L-i} and the system \eqref{G-L-alt}.
Additionally we considered the system which one can obtain integrating the
expression \eqref{F alt} first. As one can see, the system \eqref{G-L-alt}
provides the most accurate approximation. On Figure \ref{Ex1Fig2} we present
the absolute error of the recovered coefficient $g_{0}$ depending on the
number of equations in the truncated system and on the number of added
asymptotic spectral data. Only eight equations were sufficient to stabilize
the error, while for such smooth potentials one can observe constant accuracy
improvement even adding tens of thousands of asymptotic eigenvalues and
norming constants.

On Figure \ref{Ex1Fig3} we present the absolute error of the recovered
potential. As one can see, the effect of added asymptotic eigendata is much
smaller, and it is more evident near the interval endpoints. Flipping the
interval as was explained in Subsection \ref{Subsect reverse interval}
improves the accuracy near $x=\pi$ by one decimal digit. In all the
experiments the coefficient $g_{0}$ was computed on a uniform mesh of 201 points.

Finally, we recovered the potential $q$ and the constants $h$ and $H$ using
the truncated system \eqref{G-L-alt} and adding 20000 asymptotic eigenvalues
and norming constants. The $L_{1}$ error of the recovered potential was
$1.1\cdot10^{-8}$, absolute errors of the constants $h$ and $H$ --
$1.8\cdot10^{-9}$ and $2.3\cdot10^{-9}$, respectively.
The computation time of the proposed algorithm, even with this huge amount of added asymptotic eigenvalues, was less then 30 seconds on a laptop computer. By taking 5000 asymptotic eigendata pairs only, the $L_1$ error of the recovered potential increased to $5.0\cdot 10^{-8}$, while the computation time decreased to 7 seconds.

In many practical situations only a few eigenvalues and norming constants are
available. Similarly to examples from \cite[Section 13.3]{KrBook2020} we took
only 11 pairs $\{\rho_{n}, \alpha_{n}\}$, complemented them with 1990
asymptotic values and solved the same inverse problem. On Figure \ref{Ex1Fig4}
we present the recovered potential and its absolute error.

\begin{figure}[h]
\centering
\includegraphics[bb=0 0 216 173
height=2.4in,
width=3in
]{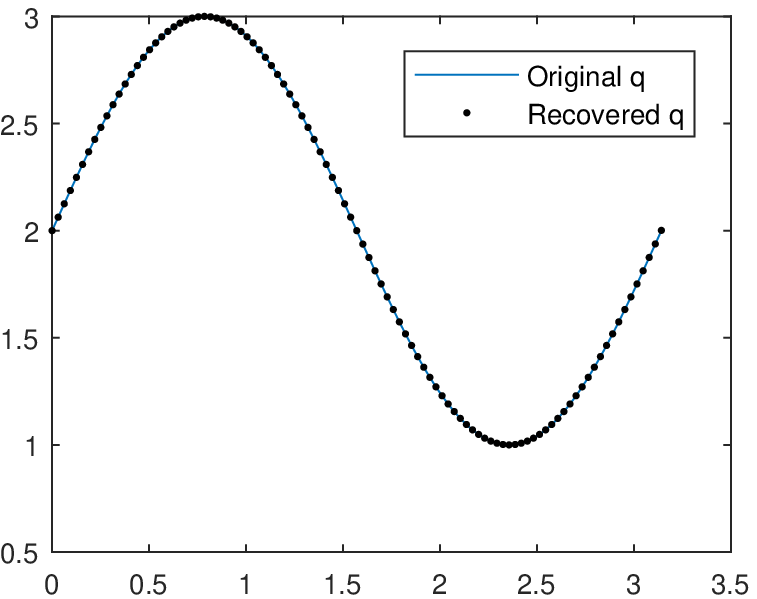} \quad\includegraphics[bb=0 0 216 173
height=2.4in,
width=3in
]{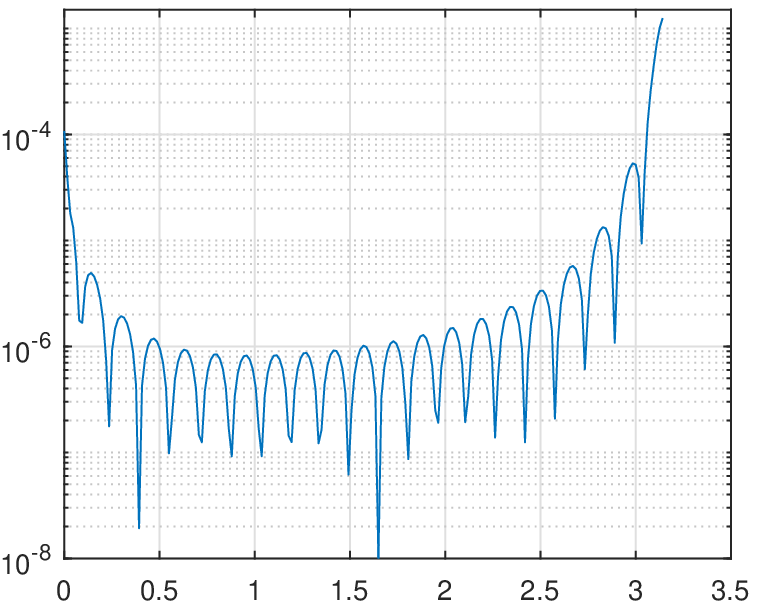}\caption{Recovered potential (on the left) and its absolute error
(on the right) for Example \ref{Ex1} obtained from the truncated system
\eqref{G-L-alt} with 8 equations. 11 exact and 1990 asymptotic eigendata pairs
were used.}%
\label{Ex1Fig4}%
\end{figure}
\end{example}

\begin{example}
\label{Ex2} Consider the spectral problem \eqref{SL equation}, \eqref{bc1}
with
\begin{equation}
\label{ex q abs}q(x)=\bigl|3-|x^{2}-3|\bigr|,
\end{equation}
$h=1$ and $H=2$. We have computed the spectral data $\{\rho_{n},\, \alpha
_{n}\}_{n=0}^{N_{1}}$ for $N_{1}=200$ using the method proposed in \cite{KNT}.

\begin{figure}[h]
\centering
\includegraphics[bb=0 0 216 173
height=2.4in,
width=3in
]{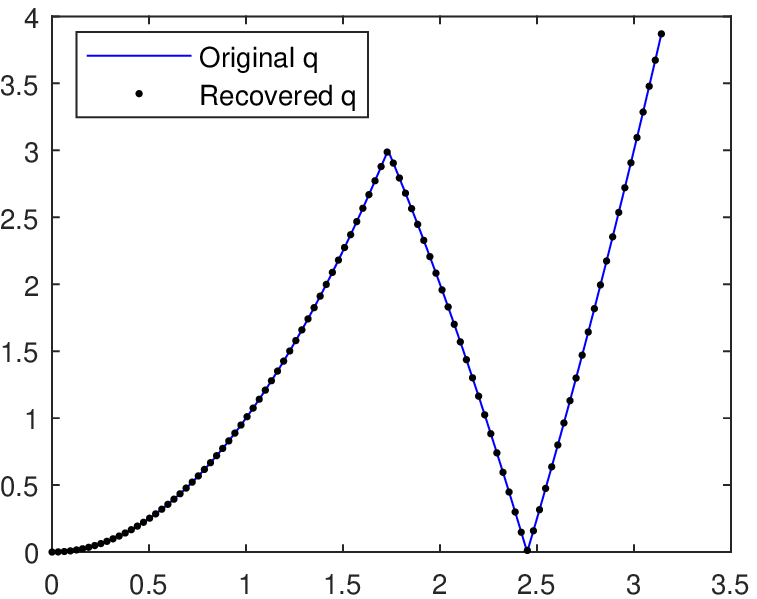} \quad\includegraphics[bb=0 0 216 173
height=2.4in,
width=3in
]{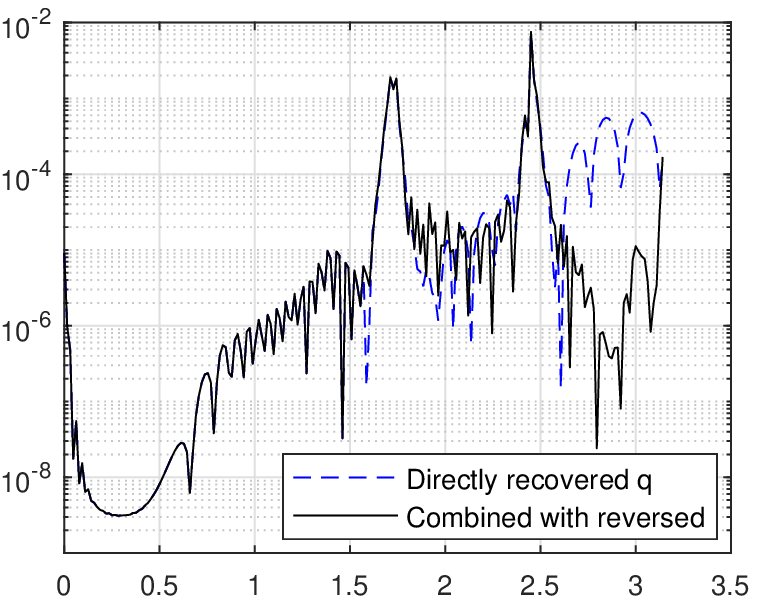}\caption{Recovered potential (on the left) and its absolute error
(on the right, dashed blue line -- obtained directly from \eqref{q from g0},
solid black line -- improved near $x=\pi$ using the flipped interval) for
Example \ref{Ex2} obtained from the truncated system \eqref{G-L-alt} with 8
equations. 201 exact and 4800 asymptotic eigendata pairs were used.}%
\label{Ex2Fig1}%
\end{figure}

On Figure \ref{Ex2Fig1} we present the recovered potential $q$ and its
absolute error, only 8 equations were used. The potential $q$ was recovered
with $L_{1}(0,\pi)$ error of $3.9\cdot10^{-4}$. The constants $\omega$, $h$
and $H$ were recovered with absolute errors of $3.7\cdot10^{-6}$,
$6\cdot10^{-8}$ and $4\cdot10^{-6}$, respectively. On Figure \ref{Ex2Fig3} we
show the result of the proposed algorithm when only 11 pairs of eigendata (to
compare with \cite{KrBook2020}) or 40 pairs of eigendata (to compare with
\cite{Kammanee Bockman 2009}) were used.

\begin{figure}[h]
\centering
\includegraphics[bb=0 0 216 173
height=2.4in,
width=3in
]{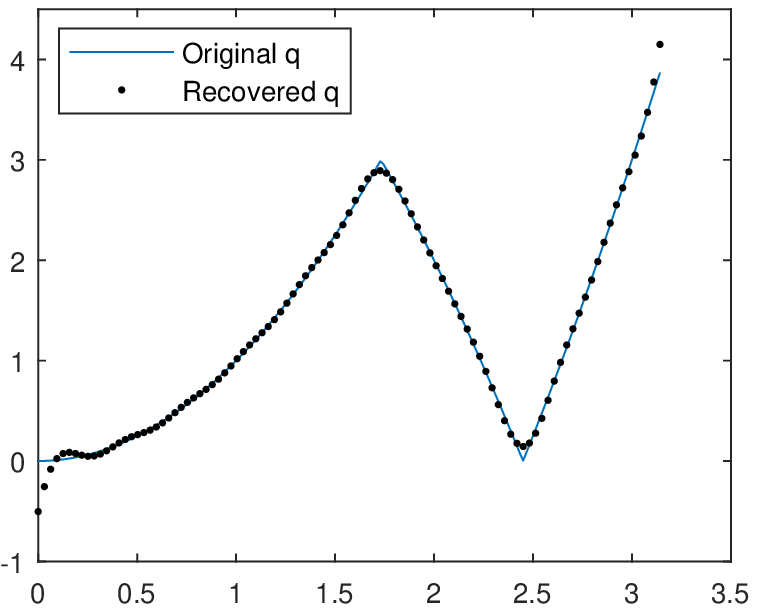} \quad\includegraphics[bb=0 0 216 173
height=2.4in,
width=3in
]{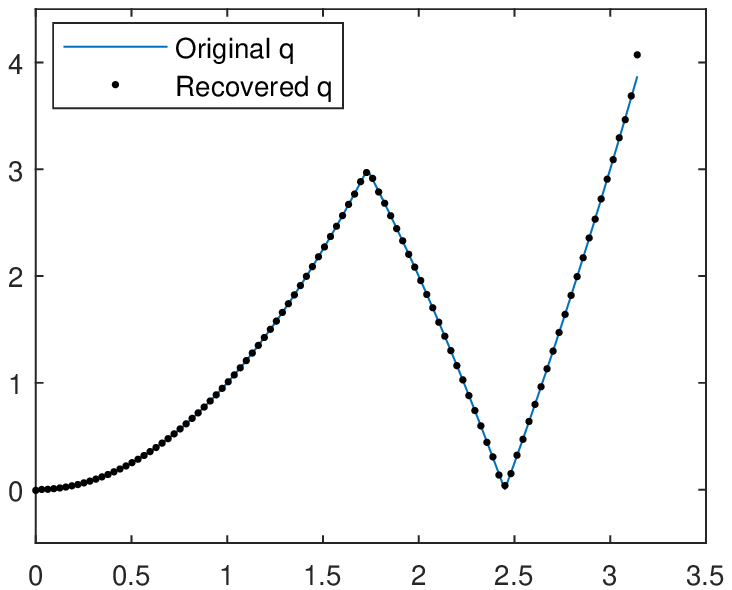}\caption{Recovered potential $q$ (on the right) for Example
\ref{Ex2} obtained from the truncated system \eqref{G-L-alt} with 8 equations.
On the left: 11 exact and 990 asymptotic eigendata pairs were used. On the
right: 40 exact and 961 asymptotic eigendata pairs were used.}%
\label{Ex2Fig3}%
\end{figure}

This example can be used to demonstrate the advantage of recovering the
coefficient $g_{0}$ only and not looking for the whole solution $G(x,t)$ of
the Gelfand-Levitan equation, recovering the potential $q$ from the expression
$G(x,x)=\sum_{n=0}^{\infty}g_{n}(x)/x$, see \eqref{Gxx series} and
\eqref{Gxx}. Indeed, the coefficients $g_{n}(x)$ for this potential decay fast
for $x<\sqrt{3}$ and slowly for $x>\sqrt{3}$ as $n\rightarrow\infty$. Here
$\sqrt{3}$ is the first point where the derivative is discontinuous. For
example, the value of $g_{500}(x)$ for $x>\sqrt{3}+0.1$ oscillates in sign
reaching values larger than $10^{-6}$. We refer the reader to \cite[(3.9) and
(4.9)]{KNT} for some estimates. For that reason, few coefficients $g_{n}(x)$
are necessary to approximate the kernel $G(x,x)$ for $x<\sqrt{3}$, but even
hundreds are not sufficient for a comparable approximation of the kernel
$G(x,x)$ for $x>\sqrt{3}$. Additionally, we do not have exact values of the
coefficients $g_{n}$, they are obtained by solving a truncated system
\eqref{G-L-alt L2form truncated}, containing some errors in the coefficients
and in the right hand side due to round-off errors, series truncation etc. So
while more coefficients $g_{n}(x)$ are necessary to be able to closely
approximate the function $G(x,x)$, summing them up results in a still larger
error due to imperfections in each computed coefficient $g_{n}(x)$. Take aside
the amount of computation time required to calculate all the coefficients and
solve the system \eqref{G-L-alt L2form truncated} with hundreds of equations.
On the contrary, the absolute error of the coefficient $g_{0}$ alone is rather
small, compensating the necessity of one more numerical differentiation, and
it can be obtained from the system of as few as 5--8 equations. We illustrate
this on Figure \ref{Ex2Fig2}. As one can see, even using 11 coefficients
$g_{n}(x)$ to approximate $G(x,x)$ for $x<\sqrt{3}$ and 251 coefficients for
$x>\sqrt{3}$ did not allow us to recover the potential $q$ with the same
accuracy as in the case when only the coefficient $g_{0}$ was used, c.f.,
Figure \ref{Ex2Fig1}.

\begin{figure}[h]
\centering
\includegraphics[bb=0 0 216 173
height=2.4in,
width=3in
]{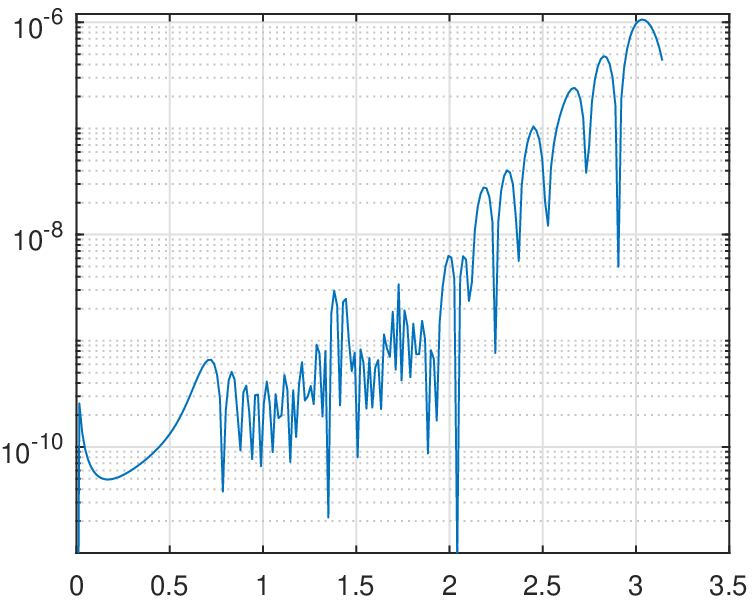} \quad\includegraphics[bb=0 0 216 173
height=2.4in,
width=3in
]{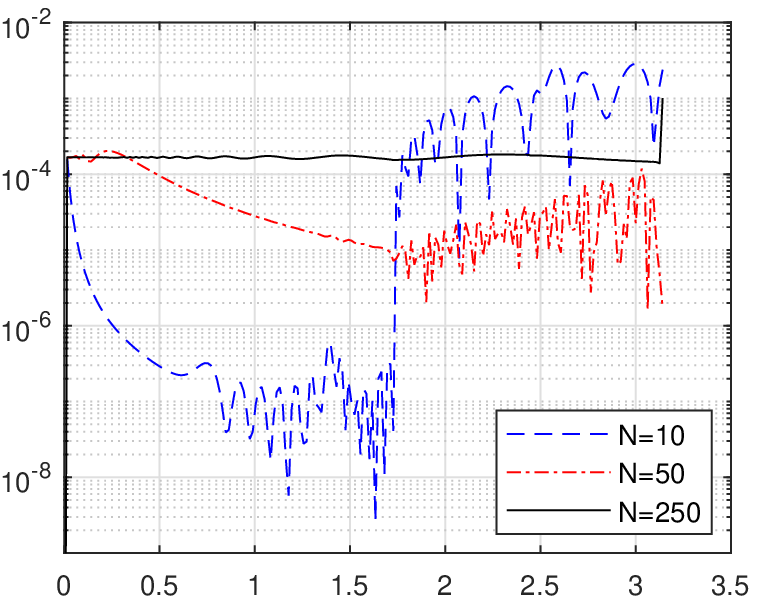}\\
\medskip
\includegraphics[bb=0 0 216 173
height=2.4in,
width=3in
]{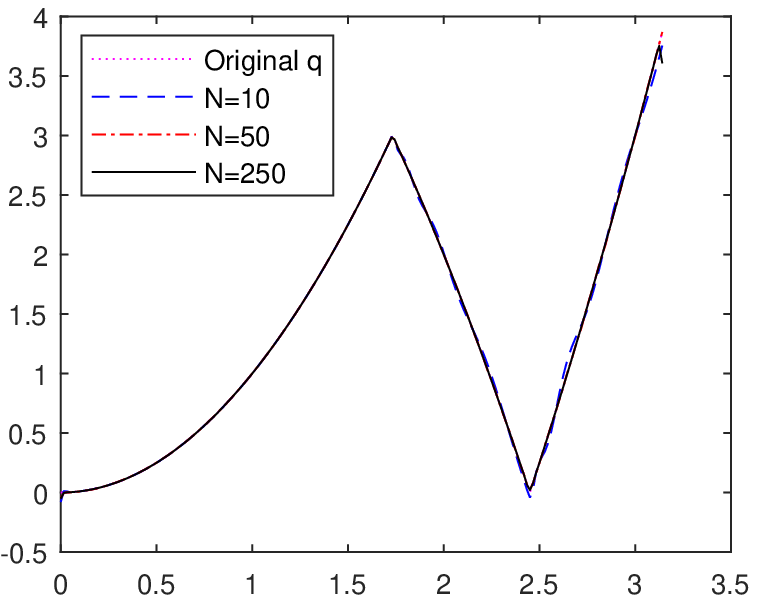} \quad\includegraphics[bb=0 0 216 173
height=2.4in,
width=3in
]{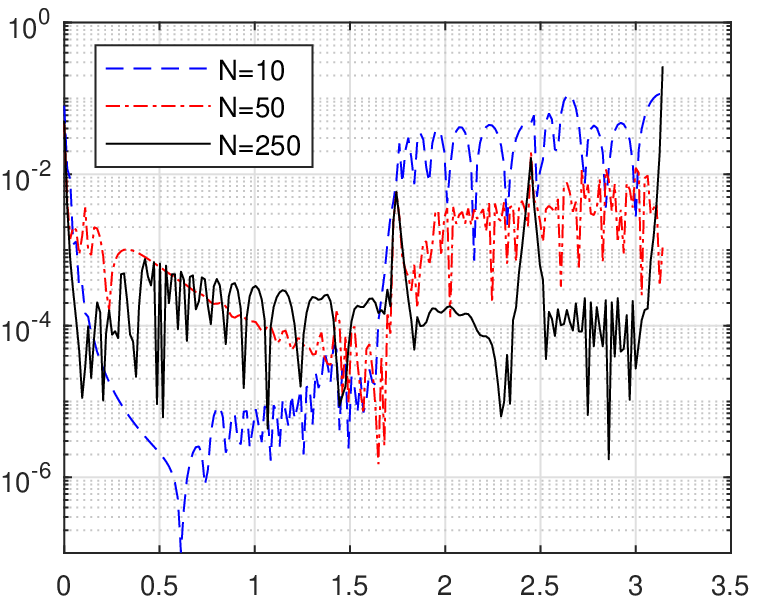}\caption{Top row: absolute error of the recovered
coefficient $g_{0}(x)$ (on the left) and the difference $\left|  G(x,x) -
\sum_{n=0}^{N} g_{n}(x)/x\right|  $ (on the right, for different values of
$N$). The coefficients $g_{n}$, $n\le N$ where obtained from the system
\eqref{G-L-alt L2form truncated} having the same truncation parameter $N$.
Bottom row: potential $q$ recovered using \eqref{Gxx}, calculated using the
coefficients $g_{n}$ with $n\le N$ (on the left) and its absolute error (on
the right). 201 exact and 4800 asymptotic eigendata pairs were used.}%
\label{Ex2Fig2}%
\end{figure}
\end{example}

\subsection{Numerical solution of Problem \ref{Problem 2}}

In this subsection we illustrate the algorithm from Subsection
\ref{Subsect Algorithm 2}. The performance of the algorithm applied for the
smooth potential from Example \ref{Ex1} was similarly excellent, being able to
recover both the potential and the constants $h$ and $H$ with accuracy better
than $10^{-8}$. For that reason we decided to present only non-smooth potentials.

\begin{example}
\label{Ex4} Consider the spectral problems \eqref{SL equation}, \eqref{bc1}
and \eqref{SL equation}, \eqref{bc2} with $h=1$, $H=2$ and potentials,
possessing discontinuous derivatives. We take the same potential
\eqref{ex q abs} and the following \textquotedblleft
sawtooth\textquotedblright\ potential
\begin{equation}
q_{2}(x)=\int_{0}^{x}\operatorname{sign}\left(  \sin\left(  \frac{10t}%
{4-t}\right)  \right)  \,dt,\label{ex q sawtooth}%
\end{equation}
inspired by an excellent article \cite{Trefethen2011}. For both potentials we
considered 40 or 201 eigenvalue pairs, complemented them with asymptotic
eigenvalues to get in total 10001 pairs and solved the system \eqref{G-L-alt}
truncated to eight equations. On Figure \ref{Ex4Fig1} we present the recovered
potentials together with $L_{1}(0,\pi)$ norm of the absolute errors and
obtained values of the parameters $h$ and $H$.

\begin{figure}[h]
\centering
\begin{tabular}
[c]{cc}%
\multicolumn{2}{c}{Potential \eqref{ex q abs}}\\
201 eigenvalue pairs. & 40 eigenvalue pairs\\
$\|q-q_{rec}\|_{L_{1}(0,\pi)} = 4.4\cdot10^{-4}$ & $\|q-q_{rec}\|_{L_{1}%
(0,\pi)} = 1.2\cdot10^{-2}$\\
$h_{rec} = 0.9999929$, $H_{rec} = 2.000055$ & $h_{rec} = 1.0011$, $H_{rec} =
1.9954$\\
\includegraphics[bb=0 0 216 173
height=2.4in,
width=3in
]{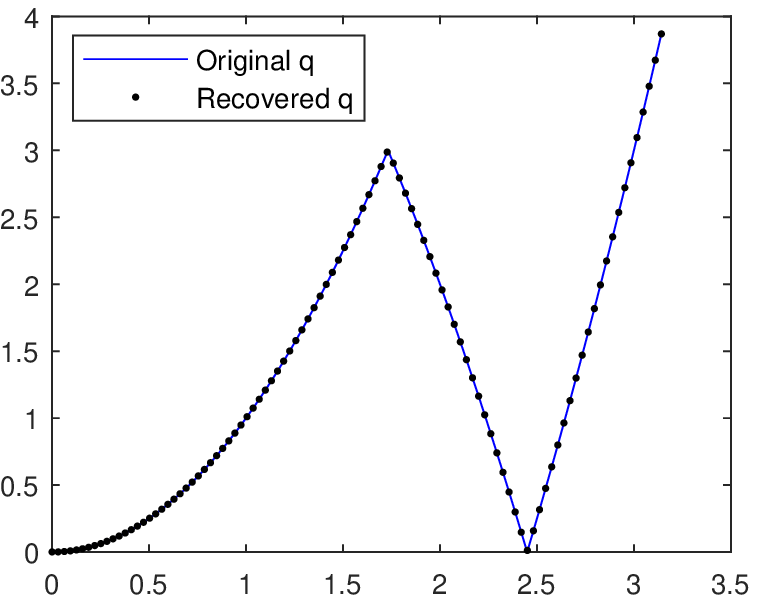} & \includegraphics[bb=0 0 216 173
height=2.4in,
width=3in
]{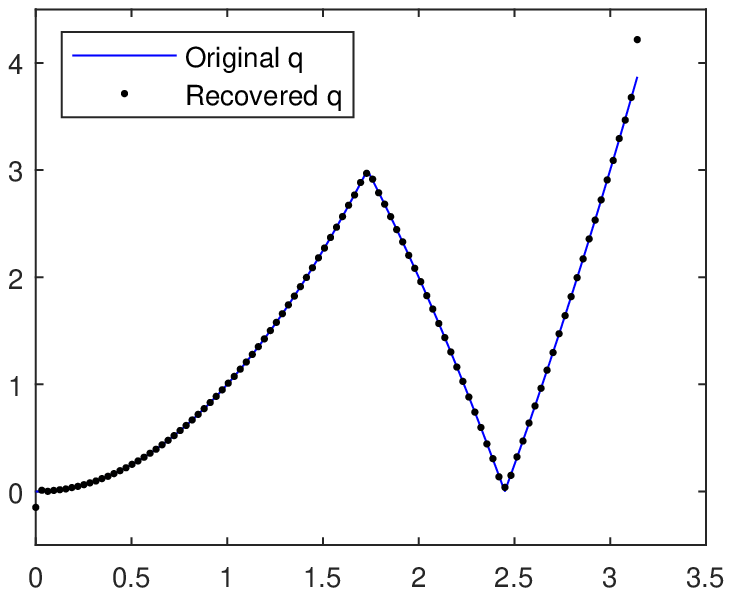}\\
\medskip\\
\multicolumn{2}{c}{Potential \eqref{ex q sawtooth}}\\
201 eigenvalue pairs. & 40 eigenvalue pairs\\
$\|q-q_{rec}\|_{L_{1}(0,\pi)} = 6.1\cdot10^{-4}$ & $\|q-q_{rec}\|_{L_{1}%
(0,\pi)} = 5.0\cdot10^{-3}$\\
$h_{rec} = 1.0000031$, $H_{rec} = 1.999980$ & $h_{rec} = 1.00031$, $H_{rec} =
1.99896$\\
\includegraphics[bb=0 0 216 173
height=2.4in,
width=3in
]{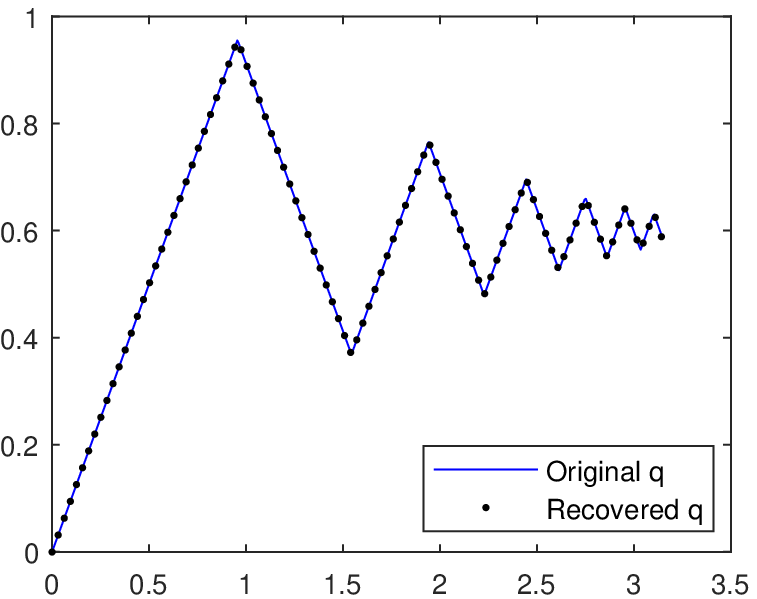} & \includegraphics[bb=0 0 216 173
height=2.4in,
width=3in
]{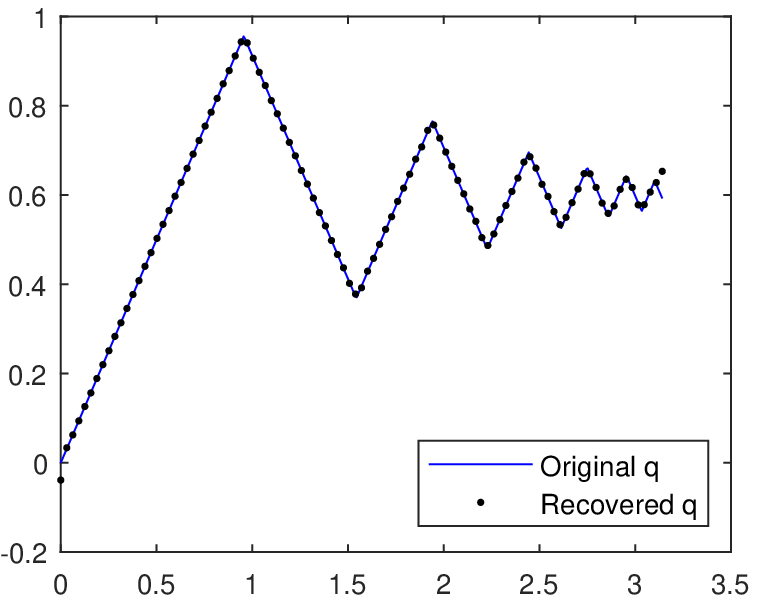}
\end{tabular}
\caption{Exact (blue lines) and recovered (black dots) potentials from Example
\ref{Ex4} together with the number of exact eigenvalue pairs used, average
absolute errors and found values of the parameters $h$ and $H$.}%
\label{Ex4Fig1}%
\end{figure}

Finally, on Figure \ref{Ex4Fig2} on the left we present the original
coefficients $g_{n}(\pi)$ and $h_{n}$ and the ones recovered by solving the
systems \eqref{syst1 overdet} and \eqref{syst2 overdet} and on the right we
present absolute errors of the recovered norming constants.
\begin{figure}[h]
\centering
\includegraphics[bb=0 0 216 173
height=2.4in,
width=3in
]{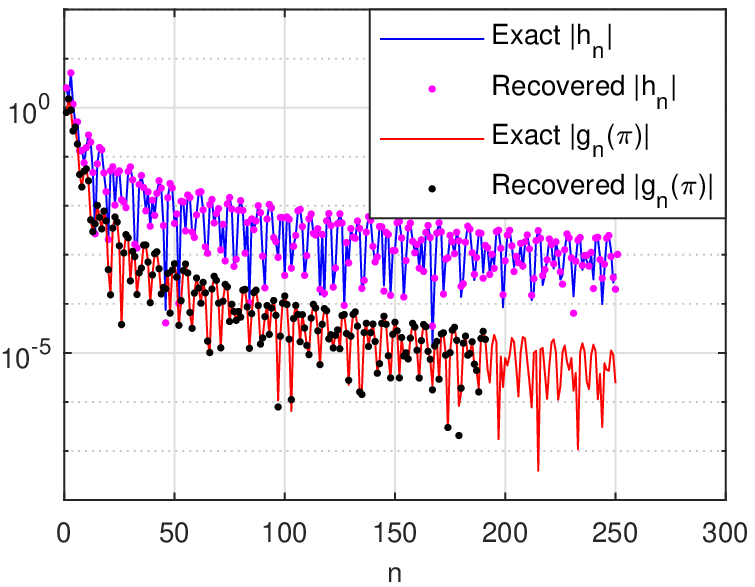} \quad\includegraphics[bb=0 0 216 173
height=2.4in,
width=3in
]{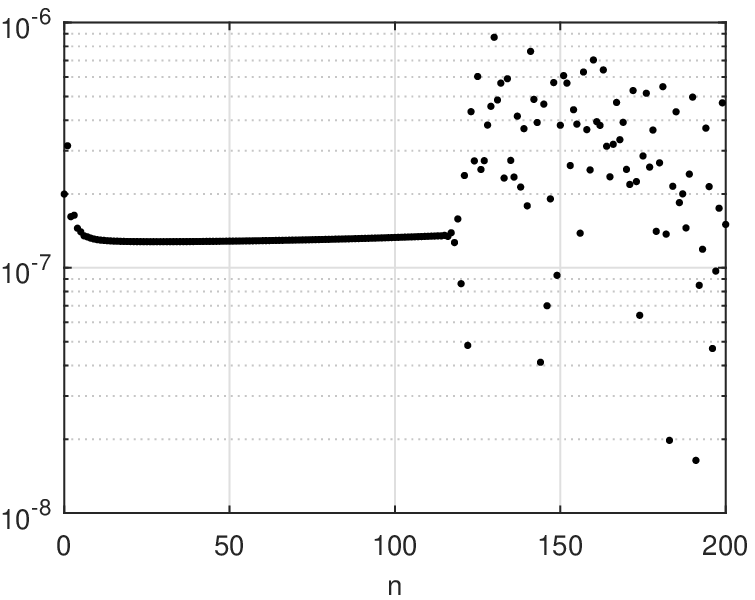}\caption{Details of the work of the proposed algorithm for
the potential $q$ from Example \ref{Ex4}. 201 exact eigenvalue pairs
complemented by 9800 ``asymptotic'' eigenvalue pairs were used. On the left:
the original coefficients $g_{n}(\pi)$ and $h_{n}$, $n\le250$ and the ones
recovered by solving the systems \eqref{syst1 overdet} and
\eqref{syst2 overdet}. On the right: absolute errors of the recovered norming
constants $\alpha_{n}$, $n\le200$.}%
\label{Ex4Fig2}%
\end{figure}
\end{example}

\begin{example}
\label{Ex5} Finally, we consider the spectral problems \eqref{SL equation},
\eqref{bc1} and \eqref{SL equation}, \eqref{bc2} with $h=1$, $H=2$ and
discontinuous potential which we tried to match with those from \cite{Rundell
Sacks}. Namely,
\begin{equation}
q_{3}(x)=%
\begin{cases}
0, & x\in\lbrack0,\frac{\pi}{8}]\cup\lbrack\frac{3\pi}{8},\frac{3\pi}{5}),\\
-\frac{12x}{\pi}+\frac{3}{2}, & x\in(\frac{\pi}{8},\frac{\pi}{4}],\\
\frac{12x}{\pi}-\frac{9}{2}, & x\in(\frac{\pi}{4},\frac{3\pi}{8}),\\
4, & x\in\lbrack\frac{3\pi}{5},\frac{4\pi}{5}),\\
2, & x\in\lbrack\frac{4\pi}{5},\pi].
\end{cases}
\end{equation}
On Figure \ref{Ex5Fig1} we present the recovered potential together with
$L_{1}(0,\pi)$ norm of the absolute error, absolute error of the recovered
parameter $\omega$ and obtained values of the parameters $h$ and $H$ for 30
and 201 eigenvalue pairs. Again, 8 equations were used in the truncated
system. The number of eigenvalues were complemented to 5000 with the
asymptotic eigenvalues.

\begin{figure}[h]
\centering
\begin{tabular}
[c]{cc}%
30 eigenvalue pairs. & 201 eigenvalue pairs\\
$|\omega-\omega_{rec}|= 6.5\cdot10^{-3}$ & $|\omega-\omega_{rec}|=
1.7\cdot10^{-4}$\\
$\|q-q_{rec}\|_{L_{1}(0,\pi)} = 2.2\cdot10^{-1}$ & $\|q-q_{rec}\|_{L_{1}%
(0,\pi)} = 8.7\cdot10^{-2}$\\
$h_{rec} = 1.014$, $H_{rec} = 1.9978$ & $h_{rec} = 0.999966$, $H_{rec} =
2.00060$\\
\includegraphics[bb=0 0 216 173
height=2.4in,
width=3in
]{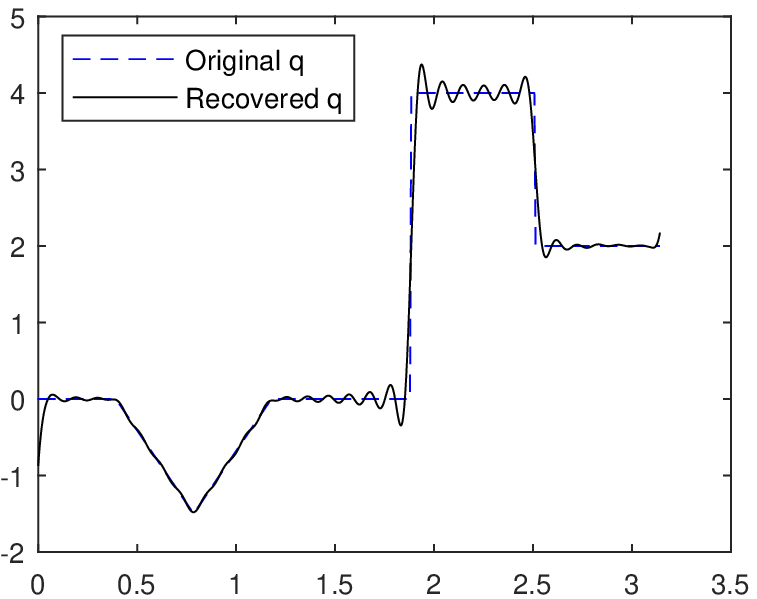} & \includegraphics[bb=0 0 216 173
height=2.4in,
width=3in
]{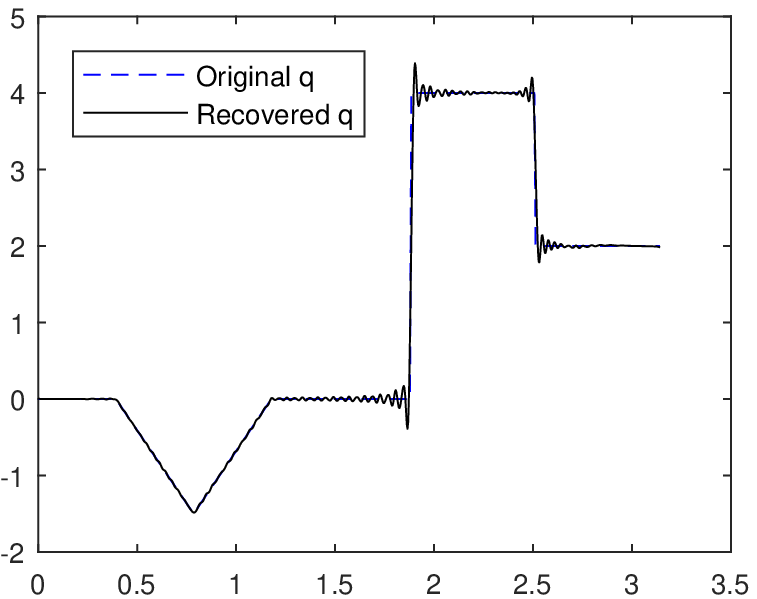}
\end{tabular}
\caption{Exact (blue dashed lines) and recovered (black lines) potential from
Example \ref{Ex5} together with the number of exact eigenvalue pairs used,
average absolute error, error in the recovered parameter $\omega$ and found
values of the parameters $h$ and $H$.}%
\label{Ex5Fig1}%
\end{figure}
\end{example}

\section*{Conclusions}

The method presented here allows one to reduce the inverse spectral problem to
a system of linear algebraic equations, and only the first component of the
solution vector is sufficient for recovering the potential. This, together
with the convergence and stability results, means that a reduced number of
equations (say, five-eight) is enough to obtain an accurate result. Though in
the present work only two classical inverse Sturm-Liouville problems are
considered, we also show how some other inverse Sturm-Liouville problems can
be solved by the same method. The numerical realization of the method is
simple and involves nothing but the build-in functions of a modern numerical
computing environment, such as Matlab.

\end{document}